\numberwithin{equation}{section}
\newcommand{\w}{\wedge}
\DeclareFontFamily{OT1}{rsfs}{}
\DeclareFontShape{OT1}{rsfs}{n}{it}{<-> rsfs10}{}
\DeclareMathAlphabet{\mathscr}{OT1}{rsfs}{n}{it}
\theoremstyle{definition}
\newcommand{\IGNORE}[1]{}
\newtheorem{theorem}{Theorem}[section]
\newtheorem{proposition}[theorem]{Proposition}
\newtheorem{lemma}[theorem]{Lemma}
\theoremstyle{definition}
\newtheorem{definition}[theorem]{Definition}
\newtheorem{remark}[theorem]{Remark}
\newcommand{\hide}[1]{}
\newcommand{\bR}{\mathbb{R}}
\newcommand{\lv}{\lvert}
\newcommand{\rv}{\rvert}
\newcommand{\lp}{\langle}
\newcommand{\rp}{\rangle}
\newcommand{\suchthat}{\mathrel{}\middle|\mathrel{}}  
\newcommand{\bN}{\mathbb{N}}
\newcommand{\mB}{\mathcal{B}}
\newcommand{\mC}{\mathcal{C}}
\newcommand{\mE}{\mathcal{E}}
\newcommand{\mQ}{\mathcal{Q}}
\newcommand{\odelta}{\overline{\delta}}
\newcommand{\onabla}{\overline{\nabla}}
\newcommand{\oDelta}{\overline{\Delta}}
\def\sideremark#1{\ifvmode\leavevmode\fi\vadjust{\vbox to0pt{\vss
 \hbox to 0pt{\hskip\hsize\hskip1em
 \vbox{\hsize3cm\tiny\raggedright\pretolerance10000
 \noindent #1\hfill}\hss}\vbox to8pt{\vfil}\vss}}}        
\DeclareMathOperator{\trace}{tr}
\DeclareMathOperator{\Tr}{Tr}
\newcommand{\oX}{\overline{X}}
\begin{document}

\title[Sobolev trace inequality]{A fully nonlinear Sobolev trace inequality}

\author[Jeffrey S.\ Case]{Jeffrey S.\ Case}
\address{Jeffrey S.\ Case \\ 109 McAllister Building \\ Penn State University \\ University Park, PA 16802}
\email{jscase@psu.edu}

\author[Yi Wang]{Yi Wang}
\address{Yi Wang, Department of Mathematics, Johns Hopkins University, 404 Krieger Hall, 3400 N. Charles Street, Baltimore, MD 21218,}
\email{ywang@math.jhu.edu}
\setcounter{page}{1}

\keywords{Hessian equation; fully nonlinear PDE; Sobolev trace inequality; variational problem}
\subjclass[2010]{Primary 35A23; Secondary 35J66}

\begin{abstract}
The $k$-Hessian operator $\sigma_k$ is the $k$-th elementary symmetric function of the eigenvalues of the Hessian.  It is known that the $k$-Hessian equation $\sigma_k(D^2u)=f$ with Dirichlet boundary condition $u=0$ is variational; indeed, this problem can be studied by means of the $k$-Hessian energy $-\int u\sigma_k(D^2u)$.  We construct a natural boundary functional which, when added to the $k$-Hessian energy, yields as its critical points solutions of $k$-Hessian equations with general non-vanishing boundary data. As a consequence, we prove a sharp Sobolev trace inequality for $k$-admissible functions $u$ which estimates the $k$-Hessian energy in terms of the boundary values of $u$.\end{abstract}
\maketitle

\section{Introduction}
\label{sec:intro}

Let $X\subset\bR^n$ be a bounded smooth domain with boundary $M=\partial X$.  The usual sharp Sobolev trace inequality states that
\begin{equation}
 \label{eqn:model_trace}
 -\int_X u\Delta u\, dx + \oint_M fu_n d\mu\geq \oint_M f(u_f)_n d\mu
\end{equation}
for all $f\in C^\infty(M)$ and all $u\in C^\infty(\overline{X})$ such that $u\rv_M=f$, where $u_n$ denotes the derivative of $u$ with respect to the outward-pointing normal along $M$, $u_f$ is the harmonic function in $X$ such that $u_f\rv_M=f$, and $dx$, $d\mu$ are the volume forms on $X$ and $M$, respectively. A standard density argument implies that the trace $u\mapsto u\rv_M=:\trace u$ extends to a bounded linear operator $\trace\colon W^{1,2}(\overline{X})\to W^{1/2,2}(M)$, while the extension $f\mapsto u_f=:E(f)$ extends to a bounded linear operator $E\colon W^{1/2,2}(M)\to W^{1,2}(\overline{X})$ such that $\trace\circ E$ is the identity.

The sharp Sobolev trace inequality~\eqref{eqn:model_trace} is a useful tool in many analytic and geometric problems.  For example, the Dirichlet-to-Neumann map $f\mapsto(u_f)_n$ is a pseudodifferential operator with principle symbol $(-\Delta)^{1/2}$; indeed, it is the operator $(-\Delta)^{1/2}$ when $\Omega=\bR_+^n$ is the upper half-plane.  Thus~\eqref{eqn:model_trace} relates the energy of the local operator $-\Delta$ to the energy of the nonlocal Dirichlet-to-Neumann operator, providing a useful tool for establishing estimates for PDEs stated in terms of the latter operator.  This strategy provides a key motivation for the approach of Caffarelli and Silvestre~\cite{CaffarelliSilvestre2007} for studying fractional powers of the Laplacian.  As another example, Escobar~\cites{Escobar1988,Escobar1990} proved an analogue of~\eqref{eqn:model_trace} on compact manifolds with boundary for which both sides of the inequality are conformally invariant.  In particular, this recovers~\eqref{eqn:model_trace} when $X=\bR_+^n$.  Using conformal invariance, he also proved a sharp Sobolev trace inequality which yields the continuous embedding $W^{1,2}(\overline{\bR_+^n})\subset L^{\frac{2(n-1)}{n-2}}(\bR^{n-1})$ when $n\geq3$.  This work has important implications for the Yamabe Problem on manifolds with boundary~\cite{Escobar1992}.  By considering weights or higher-order operators, analogues of~\eqref{eqn:model_trace} have been established with implications for the energies of fractional powers of the Laplacian of all non-integral orders~\cites{CaffarelliSilvestre2007,Yang2013} as well as for the energies of conformally covariant fractional powers of the Laplacian~\cites{Case2015fi,CaseChang2013,ChangGonzalez2011,ChangYang2015} and the fractional Yamabe problem~\cite{GonzalezQing2013}.

The purpose of this article is to establish an analogue of~\eqref{eqn:model_trace} in terms of the $k$-Hessian energy $\sigma_k(D^2u)$.  Here $D^2u$ denotes the Hessian of $u$ and the $k$-th elementary symmetric function $\sigma_k(A)$ of a symmetric matrix $A$ is defined by
\[ \sigma_k(A) := \sum_{i_1<\dotsb<i_k} \lambda_{i_1}\dotsm\lambda_{i_k} \]
for $\lambda_1,\dotsc,\lambda_n$ the eigenvalues of $A$.  The Dirichlet problem
\begin{equation}
 \label{eqn:dirichlet_problem}
 \begin{cases}
  \sigma_k(D^2u) = F(x,u), & \text{in $X$}, \\
  u = f(x), & \text{on $M$}
 \end{cases}
\end{equation}
has been well-studied for functions $u$ in the elliptic $k$-cone
\begin{equation}
 \label{eqn:kcone}
 \Gamma_k^+ := \left\{ u\in C^\infty(\oX) \suchthat \sigma_j(D^2u)>0, 1\leq j\leq k \right\} ;
\end{equation}
e.g.\ \cites{Caff4,IvochkinaTrudingerWang2004,Urbas1990,Wang2,Wang}.  Note that the existence of a solution to~\eqref{eqn:dirichlet_problem} requires that $M$ be $(k-1)$-convex~\cite{Caff4}; i.e.\ the second fundamental form $L$ of $M$ must satisfy $\sigma_j(L)>0$ for $1\leq j\leq k-1$.  Indeed, provided $M$ is $(k-1)$-convex, X.-J.\ Wang proved~\cite{Wang2} the fully nonlinear Sobolev inequality
\begin{equation}
 \label{eqn:wang_sobolev}
 \int_X -u\sigma_k(D^2u) dx\geq C(X)\left(\int_X \lv u\rv^{\frac{n(k+1)}{n-2k}}dx\right)^{\frac{n-2k}{n}}
\end{equation}
for all $u\in\Gamma_k^+$ such that $u\rv_M=0$.  In a sense, the Sobolev inequality~\eqref{eqn:wang_sobolev} is dual to the desired fully nonlinear analogue of~\eqref{eqn:model_trace}: in~\eqref{eqn:wang_sobolev} the extremal functions are ``flat'' on the boundary, in the sense $u\rv_M=0$, while in~\eqref{eqn:model_trace} the extremal functions are ``flat'' in the interior, in that $\Delta u=0$.

To establish a fully nonlinear analogue of~\eqref{eqn:model_trace} requires us to both know that the purported minimizers of the inequality exist and to identify what boundary terms to add to the interior term $-\int u\sigma_k(D^2u) dx$.  The first problem is settled: existence and uniqueness of a solution $u\in\overline{\Gamma_k^+}$ of the degenerate Dirichlet problem~\eqref{eqn:dirichlet_problem} with $F=0$ is known~\cites{IvochkinaTrudingerWang2004,WangXu}; here $\overline{\Gamma_k^+}$ is the closure of the elliptic $k$-cone~\eqref{eqn:kcone} with respect to the $C^{1,1}$-norm in $\oX$.  The second problem is addressed in this article.  This is accomplished via the following proposition.

\begin{proposition}
 \label{prop:main_prop}
 Let $X\subset\bR^n$ be a bounded smooth domain with boundary $M=\partial X$ and let $k\in\bN$.  Then there is a multilinear differential operator
 \begin{equation}
  \label{eqn:boundary_requirement}
  B_k\colon\left(C^1(\overline{X}) \cap C^2(M)\right)^k \to C^0(M)
 \end{equation}
 such that the multilinear form $\mQ_k\colon\left(C^2(X)\cap C^2(M)\cap C^1(\overline{X})\right)^{k+1}\to\bR$ defined by
 \begin{equation}
  \label{eqn:k-energy}
  \mQ_k(u,w^1,\dotsc,w^k) := -\int_X u\,\sigma_k(D^2w^1,\dotsc,D^2w^k)dx + \oint_M u\,B_k(w^1,\dotsc,w^k)d\mu
 \end{equation}
 is symmetric, where $\sigma_k(D^2w^1,\dotsc,D^2w^k)$ is the polarization of the $k$-linear map $w\mapsto\sigma_k(D^2w)$.
\end{proposition}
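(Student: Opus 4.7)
My plan rests on the divergence structure of the polarized $k$-Hessian together with an integration-by-parts identity I will call the \emph{swap identity}. Using the representation
\[
\sigma_k(D^2w^1,\dots,D^2w^k)=\frac{1}{k!}\,\delta^{i_1\cdots i_k}_{j_1\cdots j_k}\,w^1_{i_1j_1}\cdots w^k_{i_kj_k}
\]
(with $\delta^{\cdot}_{\cdot}$ the generalized Kronecker delta), antisymmetry in the upper indices kills all ``cross'' third derivatives and yields
\[
\sigma_k(D^2w^1,\dots,D^2w^k)=\tfrac{1}{k}\,\partial_i\bigl(T_{k-1}^{ij}(D^2w^1,\dots,D^2w^{k-1})\,\partial_jw^k\bigr),
\]
where $T_{k-1}$ is the polarized Newton tensor; moreover $T_{k-1}$ is divergence-free in its first index. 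Integrating $u\,\sigma_k(\cdots)$ by parts twice using these two properties then produces the swap identity
\[
\int_X\!\bigl[u\,\sigma_k(D^2w^1,\dots,D^2w^k)-w^k\,\sigma_k(D^2u,D^2w^1,\dots,D^2w^{k-1})\bigr]\,dx=\tfrac{1}{k}\oint_M T_{k-1}^{\alpha\beta}(\cdots)\bigl(u\,\nu_\alpha\partial_\beta w^k-w^k\,\nu_\alpha\partial_\beta u\bigr)\,d\mu.
\]

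I would then consider the fully symmetric polarization
\[
\widetilde F(u_0,\dots,u_k):=-\tfrac{1}{k+1}\sum_{j=0}^k\int_X u_j\,\sigma_k(D^2u_0,\dots,\widehat{D^2u_j},\dots,D^2u_k)\,dx,
\]
which is trivially symmetric in all $k+1$ arguments. Applying the swap identity to each pair $(u_0,u_j)$ for $j=1,\dots,k$ gives
\[
\widetilde F-F=\tfrac{1}{k(k+1)}\sum_{j=1}^k\oint_M T_{k-1}^{\alpha\beta}\bigl(D^2u_{\{1,\dots,k\}\setminus\{j\}}\bigr)\bigl(u_0\,\nu_\alpha\partial_\beta u_j-u_j\,\nu_\alpha\partial_\beta u_0\bigr)\,d\mu,
\]
where $F(u_0;u_1,\dots,u_k):=-\int_X u_0\,\sigma_k(D^2u_1,\dots,D^2u_k)\,dx$. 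I would then set $\mathcal Q_k:=\widetilde F+C$ for a to-be-constructed fully symmetric boundary integral $C$; $\mathcal Q_k$ would automatically inherit symmetry, and $C$ would be chosen so that $\mathcal Q_k-F$ takes the required form $\oint_M u_0\,B_k(u_1,\dots,u_k)\,d\mu$ with $B_k$ symmetric in $(u_1,\dots,u_k)$. The key step is to rewrite each residual term $u_j\,T_{k-1}^{\alpha\beta}(\cdots)\nu_\alpha\partial_\beta u_0$ by decomposing $\nabla u_0=(\partial_\nu u_0)\nu+\nabla^M u_0$: the tangential component $\nabla^M u_0$ can be moved off $u_0$ via integration by parts along the closed manifold $M$, producing only terms of the form $u_0\times(\cdots)$, while the normal component $u_j\,(T_{k-1}^{\alpha\beta}\nu_\alpha\nu_\beta)\,\partial_\nu u_0$ must be absorbed into $C$ using the boundary Hessian decomposition $(D^2u_m)_{\alpha\beta}=(\mathrm{Hess}^M u_m|_M)_{\alpha\beta}+L_{\alpha\beta}\,\partial_\nu u_m$ for tangential indices $\alpha,\beta$ (with $L$ the second fundamental form of $M$).

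The hard part will be arranging the symmetric cancellations for the normal-derivative residuals. Via the Hessian decomposition, the boundary scalar $T_{k-1}^{\alpha\beta}\nu_\alpha\nu_\beta$ rewrites as the polarized $(k-1)$-th symmetric function in the tangential Hessians $\mathrm{Hess}^M u_m|_M+L\,\partial_\nu u_m$; the combinatorial structure of this expansion, combined with iterated tangential integration by parts on $M$, should deliver the cancellations needed to express the total boundary contribution as $\oint_M u_0\,B_k(u_1,\dots,u_k)\,d\mu$ and to confirm that $C$ is indeed fully symmetric. Once this is carried out, $B_k$ emerges as an explicit symmetric multilinear boundary operator in the tangential Hessians $\mathrm{Hess}^M w^m|_M$, the normal derivatives $\partial_\nu w^m$, and $L$; its continuity on $M$ given inputs of regularity $C^1(\overline X)\cap C^2(M)$ is then immediate from this explicit form.
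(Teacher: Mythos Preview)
Your plan is essentially the paper's strategy, reorganized. The paper also begins with a manifestly symmetric interior expression (their $S_0$, a Dirichlet-type form in first derivatives) and integrates by parts in $X$ to produce the interior term $\int_X u\,\sigma_k(\cdots)$ plus a boundary remainder; your averaged functional $\widetilde F$ together with the swap identity is a tidier variant of the same first step. The ``hard part'' you flag---absorbing the residual normal-derivative terms $u_j\,\sigma_{k-1}(D^2w\rvert_{TM}^{\wedge\cdots})\,\partial_\nu u_0$ into a symmetric $C$---is precisely what the paper carries out explicitly: it symmetrizes each such residual $Q_i$ to produce a symmetric piece $S_{i+1}$ plus terms $U_{2i+1},U_{2i+2}$ of the form $\oint u\,B(\cdots)$, and a new residual $Q_{i+1}$ carrying one more copy of $L$; the iteration terminates after $k$ steps because $\sigma_{k-1}(L,\dots,L,\varnothing)$ has no remaining $D^2u$-slot. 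So your sketch is viable, but the combinatorics you defer is the entire content of the argument.

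There is, however, one genuine gap. Your final claim that $B_k$ ``emerges as an explicit operator in $\mathrm{Hess}^M w^m$, $\partial_\nu w^m$, and $L$, so continuity on $C^1(\overline X)\cap C^2(M)$ is immediate'' is not correct as stated. Already in the swap identity the boundary density contains $T_{k-1}^{n\gamma}$ for tangential $\gamma$, and this component is built from the mixed second derivatives $w_{\gamma n}=\nabla^M_\gamma(\partial_\nu w)-L_{\alpha\gamma}w_\alpha$; the subsequent tangential integrations by parts then differentiate these again. Thus the raw output of your procedure depends a priori on tangential derivatives of $\partial_\nu w^m$ (and, after IBP on $M$, on up to fourth-order data). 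The paper treats this as a separate, nontrivial step: its Section~3 yields only an operator $A_k$ depending on up to four derivatives, and an entire additional section is devoted to regrouping the boundary terms and showing---via the identities $w_{\beta n}=(w_n)_{\bar\beta}-L_{\alpha\beta}w_\alpha$, the formula for $T_{k-1}^{\alpha n}$, and the tangential divergence identity for $T_k(L,\dots,L,D^2w\rvert_{TM})$---that all contributions involving $\nabla^M(\partial_\nu w)$ telescope away. You need to either carry out that cancellation or at least indicate the mechanism; it does not fall out automatically from the form of the expansion.
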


\begin{remark}
 The notation~\eqref{eqn:boundary_requirement} specifies that the operators $B_k$ depend on at most second-order tangential derivatives and at most first-order transverse derivatives of their inputs along the boundary $M$.
\end{remark}

An explicit formula for such operators $B_k$ can be deduced from Section~\ref{sec:polarized} and Section~\ref{sec:adjusted}.  From~\eqref{eqn:model_trace} we see that $B_1(u)=u_n$ satisfies the conclusions of Proposition~\ref{prop:main_prop}.  The following result gives a boundary operator which satisfies the conclusions of Proposition~\ref{prop:main_prop} when $k=1$.

\begin{proposition}
 \label{prop:main_prop2}
 Let $X\subset\bR^n$ be a bounded smooth domain with boundary $M=\partial X$.  Define $B_2\colon\left(C^1(\oX)\cap C^2(M)\right)^2\to C^0(M)$ by
 \begin{equation}
  \label{eqn:k2_boundary_operator}
  B_2(v,w) = \frac{1}{2}\left(v_n\oDelta w + w_n\oDelta v + L(\onabla v,\onabla w) + Hv_nw_n \right) .
 \end{equation}
 Then the multilinear form $\mQ_2\colon\left(C^2(\oX)\right)^3\to\bR$ given by
 \[ \mQ_2(u,v,w) = -\int_X u\sigma_2(D^2v,D^2w)dx + \oint_M uB_2(v,w)d\mu \]
 is symmetric.
\end{proposition}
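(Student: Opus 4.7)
The form $\mQ_2$ is manifestly symmetric in its last two arguments, since $\sigma_2(D^2v, D^2w) = \frac{1}{2}[(\Delta v)(\Delta w) - v_{ij}w_{ij}]$ and $B_2(v, w)$ are both invariant under $v \leftrightarrow w$. It therefore suffices to prove $\mQ_2(u, v, w) = \mQ_2(w, v, u)$, and the plan is to do so by computing the interior difference through integration by parts and matching the resulting boundary integral term by term against $\oint_M [u B_2(v, w) - w B_2(v, u)]\,d\mu$. The key algebraic input is the Newton tensor $T^v_{ij} := (\Delta v)\delta_{ij} - v_{ij}$, which satisfies $\sigma_2(D^2v, D^2w) = \frac{1}{2} T^v_{ij} w_{ij}$ and $\partial_j T^v_{ij} = 0$ by the commutativity of partial derivatives. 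Two successive integrations by parts, first in $\partial_j$ and then in $\partial_i$, will produce the identity
\[ \int_X u\,\sigma_2(D^2v, D^2w)\,dx - \int_X w\,\sigma_2(D^2u, D^2v)\,dx = \frac{1}{2}\oint_M \bigl(u T^v_{ij} w_i \nu_j - w T^v_{ij} u_j \nu_i\bigr)\,d\mu. \]

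Next I will rewrite this boundary integrand in intrinsic data on $M$. Working at a boundary point in an adapted orthonormal frame with $e_n = \nu$ and Greek indices for tangential directions, expansion of $T^v_{ij} = (\Delta v)\delta_{ij} - v_{ij}$ splits the integrand into pieces involving $\Delta v, v_{nn}, v_{\alpha n}$ together with $u_n, w_n$ and the tangential derivatives $u_\alpha, w_\alpha$. Two Gauss--Weingarten identities, derived by computing at a boundary point where $M$ is realized as a graph $x_n = \phi(x')$ with $\phi(0) = 0$ and $\nabla\phi(0) = 0$, and keeping careful track of the tangential rotation of $\nu$, convert these to intrinsic quantities:
\[ \Delta v\big|_M = \oDelta v + H v_n + v_{nn}, \qquad v_{\alpha n}\big|_M = (\onabla v_n)_\alpha - L(\partial_\alpha, \onabla v). \]

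The last step is a term-by-term matching. Direct expansion of~\eqref{eqn:k2_boundary_operator} shows that $\oint_M [u B_2(v, w) - w B_2(v, u)]\,d\mu$ decomposes into four groups of terms arising from the $\oDelta v$, $\oDelta w$, $L(\onabla v, \onabla w)$, and $H v_n w_n$ contributions in $B_2$. Substituting the two Gauss--Weingarten identities above into the interior boundary integrand produces matching groups of opposite sign: the $\oDelta v$ and $H v_n$ contributions cancel against the pieces of $\Delta v - v_{nn}$, and the $L(\onabla v, u\onabla w - w\onabla u)$ contribution cancels against the extrinsic correction in $v_{\alpha n}|_M$. The only remaining terms are $\langle u\onabla w - w\onabla u,\, \onabla v_n\rangle$ from the interior and $v_n(u\oDelta w - w\oDelta u)$ from $B_2$; these cancel after one further integration by parts on the closed manifold $M$, since the intrinsic divergence of $u\onabla w - w\onabla u$ equals $u\oDelta w - w\oDelta u$. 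The main obstacle is pinning down the correct extrinsic correction in the identity for $v_{\alpha n}|_M$: this correction is precisely what produces the $L(\onabla v, \onabla w)$ summand in $B_2$, and any sign or normalization error would leave a residual tangential term that no adjustment of the other pieces of $B_2$ could cancel.
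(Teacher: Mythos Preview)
Your proof is correct and takes a genuinely different route from the paper's.

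The paper derives Proposition~\ref{prop:main_prop2} as a corollary of the general construction in Sections~\ref{sec:polarized} and~\ref{sec:adjusted}: running that algorithm for $k=2$ produces a more complicated boundary operator $\tilde B_2$, and the paper then verifies that the extra terms $\tilde B_2 - B_2$ already define a symmetric trilinear form on $M$, so they may be discarded. Your argument, by contrast, is entirely self-contained: you exploit directly that the Newton tensor $T^v_{ij}=(\Delta v)\delta_{ij}-v_{ij}$ is divergence-free in flat space, integrate by parts twice to reduce $\mQ_2(u,v,w)-\mQ_2(w,v,u)$ to a boundary integral, and then match term by term using the identities $\Delta v|_M=\oDelta v+Hv_n+v_{nn}$ and $v_{\alpha n}|_M=(v_n)_{\bar\alpha}-L_{\alpha\beta}v_\beta$ (which coincide with the paper's~\eqref{eqn:tangential_hessian} and~\eqref{eqn:tangential_mixed}). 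The final integration by parts on $M$, using $\odelta(u\onabla w-w\onabla u)=u\oDelta w-w\oDelta u$, is exactly what is needed to close the identity. Your approach is more elementary and makes transparent why this particular $B_2$ works; the paper's approach has the advantage of situating the $k=2$ case within the general family and illustrating the non-uniqueness of $B_k$ discussed in the introduction.
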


Here $\oDelta$ and $\onabla$ denote the tangential Laplacian and tangential gradient, respectively; i.e.\ the Laplacian and the gradient defined with respect to the induced metric on the boundary $M$.

Denote by $\mE_k(u):=\mQ_k(u,\dotsc,u)$ the energy associated to $\mQ_k$ as in Proposition~\ref{prop:main_prop}.  The fact that~\eqref{eqn:k-energy} defines a symmetric $(k+1)$-linear form implies that if $v\in C^\infty(\overline{X})$ is such that $v\rv_M=0$, then
\[ \left.\frac{d^j}{dt^j}\right|_{t=0}\mE_k(u+tv) = -\frac{(k+1)!}{(k+1-j)!}\int_X v\,\sigma_k\Bigl(\overbrace{D^2v,\dotsc,D^2v}^{j-1},\overbrace{D^2u,\dotsc,D^2u}^{k+1-j}\Bigr) dx \]
for all $1\leq j\leq k+1$.  That is, within a class
\[ \mC_f := \left\{ u\in C^\infty(\overline{X}) \suchthat u\rv_M = f \right\} \]
of functions with fixed trace $f\in C^\infty(M)$, the derivatives of the energies $\mE_k$ depend only on the interior integrals.  In particular, it is straightforward to identify the critical points of $\mE_k$ and deduce the convexity of $\mE_k$ within the positive cone $\Gamma_k^+$.  This leads to the following family of fully nonlinear Sobolev trace inequalities.

\begin{theorem}
 \label{thm:main_thm}
 Fix $k\in\bN$ and let $X\subset\bR^n$ be a bounded $(k-1)$-convex domain with boundary $M=\partial X$.  Let $B_k$ be as in Proposition~\ref{prop:main_prop}.  Given $f\in C^\infty(M)$, let
 \[ \mC_{f,k} := \left\{ u\in \mC_f \suchthat D^2u\in\Gamma_k^+ \right\} . \]
 Then it holds that
 \begin{equation}
  \label{eqn:k-sobolev-trace}
  \mE_k(u) \geq \mE_k(u_f)
 \end{equation}
 for all $u\in\overline{\mC_{f,k}}$, where $u_f$ is the unique solution to the Dirichlet problem
 \begin{equation}
  \label{eqn:degenerate_dirichlet_problem}
  \begin{cases}
   \sigma_k(D^2u) = 0, & \text{in $X$}, \\
   u = f, & \text{on $M$} ,
  \end{cases}
 \end{equation}
 and $\overline{\mC_{f,k}}$ is the closure of $\mC_{f,k}$ with respect to the $C^{1,1}$-norm in $\oX$.
\end{theorem}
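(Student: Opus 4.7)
The plan is to exploit the symmetry of $\mathcal{Q}_k$ from Proposition~\ref{prop:main_prop} to reduce the problem to showing that $\mathcal{E}_k$ has $u_f$ as a critical point within $\mathcal{C}_f$ and is convex along straight-line variations that fix boundary values. Fix a smooth $u\in\mathcal{C}_{f,k}$, set $v:=u-u_f$ so $v\rv_M=0$, and define $g(t):=\mathcal{E}_k(u_f+tv)$. This is a polynomial in $t$ of degree at most $k+1$, so it suffices to prove $g'(0)=0$ and $g''(t)\geq 0$ on $[0,1]$ in order to conclude $g(1)\geq g(0)$.

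By the differentiation identity stated immediately before the theorem (which applies precisely because $v\rv_M=0$ lets one place $v$ in the distinguished slot of $\mathcal{Q}_k$ and annihilate the boundary integral),
\[
 g'(0)=-(k+1)\int_X v\,\sigma_k(D^2u_f)\,dx=0
\]
since $u_f$ solves \equ{eqn:degenerate_dirichlet_problem}. Similarly,
\[
 g''(t)=-k(k+1)\int_X v\,\sigma_k\bigl(D^2v,D^2u_t,\dotsc,D^2u_t\bigr)\,dx,\qquad u_t:=u_f+tv.
\]
Using the polarization identity $k\,\sigma_k(D^2v,D^2w^{k-1})=T^{k-1}_{ij}(D^2w)\,v_{ij}$, where $T^{k-1}$ is the $(k-1)$-st Newton tensor, together with the divergence-free property $\partial_i T^{k-1}_{ij}(D^2w)=0$, one integrates by parts; the boundary contribution vanishes since $v\rv_M=0$, leaving
\[
 g''(t)=(k+1)\int_X T^{k-1}_{ij}(D^2u_t)\,v_iv_j\,dx.
\]

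Since $\Gamma_k^+$ is a convex cone, $u_f,u\in\overline{\mathcal{C}_{f,k}}$ implies $u_t\in\overline{\Gamma_k^+}$ for $t\in[0,1]$, and a standard G\aa{}rding-cone property gives that $T^{k-1}(D^2u_t)$ is positive semi-definite there. Thus $g''\geq 0$, $g$ is convex, and combined with $g'(0)=0$ this yields $\mathcal{E}_k(u)\geq\mathcal{E}_k(u_f)$ for smooth $u\in\mathcal{C}_{f,k}$.

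The main obstacle is regularity: the divergence-free identity for $T^{k-1}$ and the above integrations by parts require more smoothness than the $C^{1,1}$-bound supplied by the Ivochkina--Trudinger--Wang theory for $u_f$, and the class $\overline{\mathcal{C}_{f,k}}$ is only closed in $C^{1,1}$. The remedy I would use is two-step approximation. First, regularize $u_f$ by the smooth $k$-admissible solutions $u_f^{\varepsilon}$ of the non-degenerate problem $\sigma_k(D^2u^{\varepsilon})=\varepsilon$, $u^{\varepsilon}\rv_M=f$; these are classical and converge to $u_f$ in $C^{1,1}(\oX)$ as $\varepsilon\to 0^+$. The argument above applies verbatim to $u_f^{\varepsilon}$ for smooth $u\in\mathcal{C}_{f,k}$, giving $\mathcal{E}_k(u)\geq\mathcal{E}_k(u_f^{\varepsilon})$. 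Passing $\varepsilon\to 0$ and then approximating a general $u\in\overline{\mathcal{C}_{f,k}}$ by smooth admissible elements in the $C^{1,1}$ norm yields~\equ{eqn:k-sobolev-trace}; the continuity of $\mathcal{E}_k$ under these limits follows because the interior integrand $u\,\sigma_k(D^2u)$ and the boundary operator $B_k$ (which, by \equ{eqn:boundary_requirement}, involves only tangential second derivatives and normal first derivatives along $M$) are both controlled by the $C^{1,1}(\oX)$ norm.
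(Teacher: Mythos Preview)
Your approach is the same as the paper's: use the symmetry of $\mQ_k$ to compute the first and second variations, observe that $u_f$ is critical and that the Newton tensor $T_{k-1}(D^2u_t)$ is nonnegative on $\overline{\Gamma_k^+}$, and conclude by convexity along the segment. The paper's Section~5 carries out exactly this (Propositions~5.1 and~5.2 followed by a one-paragraph convexity argument), and in fact does not address the regularity issue you raise; your treatment is more careful on that point.

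One small correction to your approximation step: when you replace $u_f$ by the smooth $u_f^\varepsilon$, the argument does \emph{not} apply verbatim, because the first variation is no longer zero. With $v=u-u_f^\varepsilon$ one has
\[
 g'(0)=-(k+1)\int_X v\,\sigma_k(D^2u_f^\varepsilon)\,dx=-(k+1)\varepsilon\int_X v\,dx,
\]
which need not vanish. Convexity of $g$ still gives $g(1)\geq g(0)+g'(0)$, i.e.
\[
 \mE_k(u)\geq\mE_k(u_f^\varepsilon)-(k+1)\varepsilon\int_X(u-u_f^\varepsilon)\,dx,
\]
and since $u_f^\varepsilon\to u_f$ in $C^{1,1}(\oX)$ the error term vanishes as $\varepsilon\to0^+$ while $\mE_k(u_f^\varepsilon)\to\mE_k(u_f)$. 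With this adjustment the limiting inequality $\mE_k(u)\geq\mE_k(u_f)$ follows, and your proof is complete.
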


Note that $\mE_k(u_f)=\oint f\,B_k(u_f,\dotsc,u_f)d\mu$, so that Proposition~\ref{prop:main_prop} implies that the right-hand side of~\eqref{eqn:k-sobolev-trace} depends only on $f$, the tangential gradient $\onabla f$, the tangential Hessian $\bar D^2 f$, and the normal derivative $(u_f)_n$ of the extension $u_f$.  This is consistent with the expected regularity $u_f\in C^{1,1}(\overline{X})$.  One may regard~\eqref{eqn:k-sobolev-trace} as a norm inequality for part of the trace embedding $W^{\frac{2k}{k+1},k+1}(X)\subset W^{\frac{2k-1}{k+1},k+1}(M)$.

We conclude this introduction with a few additional comments on the boundary operators $B_k$ of Proposition~\ref{prop:main_prop}.  Given $f\in C^\infty(M)$ and $k\in\bN$, define
\[ \mB_k(f) := B_k(u_f,\dotsc,u_f) \]
for $u_f$ the solution to~\eqref{eqn:degenerate_dirichlet_problem}.  The specification~\eqref{eqn:boundary_requirement} of the domain of the boundary operators $B_k$ implies that $\mB_k$ is a well-defined function; it should be regarded as a fully nonlinear analogue of the Dirichlet-to-Neumann map.  Theorem~\ref{thm:main_thm} yields a relationship between the energy of $\mB_k$ and the energy associated to the $\sigma_k$-curvature.  Motivated by the similar relationship between the energies associated to fractional order operators and the Laplacian induced by~\eqref{eqn:model_trace}, we propose the study of the operators $\mB_k$ as an interesting family of fully nonlinear pseudodifferential operators.  In particular, it seems interesting to ask if there exists a constant $C(M)>0$ such that
\[ A(M)\oint_M f\,\mB_k(f)d\mu + B(M)\oint_M \lv f\rv^{k+1} d\mu\geq \left( \oint_M \lv f\rv^{\frac{(k+1)(n-1)}{n-2k}}d\mu\right)^{\frac{n-2k}{n-1}} . \]
If true, this would provide a fully nonlinear analogue of the sharp Sobolev inequality of X.-J.\ Wang~\cite{Wang}.  Note that this is already known in the case $k=1$; cf.\ \cite{LiZhu1997}.

The conditions of Proposition~\ref{prop:main_prop} do not uniquely determine the boundary operators $B_k$ of Proposition~\ref{prop:main_prop}; indeed, the operators are not unique even if we require additionally that the operators $B_k$ commute with diffeomorphisms, as do the operators constructed in the proof of Proposition~\ref{prop:main_prop}.  A trivial source of nonuniqueness comes from the freedom to add symmetric zeroth-order terms to $B_k$.  For example, if $B_k$ satisfies the conclusions of Proposition~\ref{prop:main_prop}, so too does the operator
\[ (w^1,\dotsc,w^k) \mapsto B_k(w^1,\dotsc,w^k) + cHw^1\dotsm w^k \]
for any $c\in\bR$, where $H$ is the mean curvature of the boundary $M$.  More generally, one may add to the boundary operators $B_k$ any symmetric multilinear operator which is also symmetric upon pairing with integration.  For example, consider the operator $D\colon\left(C^1(\overline{X})\right)^2\to C^\infty(M)$ defined by
\[ D(v,w) = \odelta\left( L(\onabla(vw))\right) - L(\onabla v,\onabla w) . \]
It is readily verified that $(u,v,w)\mapsto\oint u\,D(v,w)d\mu$ is a symmetric trilinear form, and thus $D$ can be added to the operator~\eqref{eqn:k2_boundary_operator} to yield another operator $\tilde B_2$ which satisfies the conclusions of Proposition~\ref{prop:main_prop}.

This article is organized as follows.  In Section~\ref{sec:bg} we collect some useful facts involving the $k$-Hessian and the elliptic cones.  In Section~\ref{sec:polarized} and Section~\ref{sec:adjusted} we prove Proposition~\ref{prop:main_prop} by explicitly constructing a suitable boundary operator.  In Section~\ref{sec:variation} we prove Theorem~\ref{thm:main_thm}.  In Section~\ref{sec:example} we discuss in more detail the case $k=2$.

\section{Preliminaries}
\label{sec:bg}

\subsection{The $\Gamma_k^+$-cone}
In this subsection, we describe some properties of the elementary symmetric functions and their associated convex cones.

\begin{definition} The $k$-th elementary symmetric function for
 $\lambda=(\lambda_1,\dotsc,\lambda_n)\in \mathbb{R}^n$ is
$$\sigma_{k}(\lambda):=\sum_{i_1<\dotsb<i_k}\lambda_{i_1}\dotsm \lambda_{i_k}.$$
\end{definition}

The elementary symmetric functions are special cases of hyperbolic polynomials~\cite{Garding}.  As such, they enjoy many nice properties in their associated positive cones.

\begin{definition}\label{cone}
The positive $k$-cone is the connected component of $\left\{\lambda \suchthat\sigma_k(\lambda)>0\right\}$ which contains $(1,\dotsc,1)$.  Equivalently,
\[ \Gamma_k^+=\left\{\lambda\in \mathbb{R}^n\suchthat \sigma_{1}(\lambda)>0,\dotsc,\sigma_{k}(\lambda)>0\right\}. \]
\end{definition}

For example, the positive $n$-cone is
\[ \Gamma_n^+ = \left\{ \lambda\in\bR^n \suchthat \lambda_1,\dotsc,\lambda_n>0 \right\} \]
and the positive $1$-cone is the half-space
\[ \Gamma_1^+ = \left\{ \lambda\in\bR^n \suchthat \lambda_1+\dotsb+\lambda_n>0 \right\} . \]
Note that $\Gamma_k^+$ is an open convex cone and that
$$\Gamma_{n}^+ \subset \Gamma_{n-1}^+\dotsb \subset\Gamma_{1}^+.$$

Applying G{\aa}rding's theory of hyperbolic polynomials \cite{Garding}, one concludes that
$\sigma_{k}^{\frac{1}{k}}$ is a concave function in $\Gamma_{k}^{+}$.

\begin{definition}A symmetric matrix $A$ is in the $\tilde{\Gamma}_{k}^+$ cone if its 
eigenvalues $$\lambda(A)=(\lambda_1(A),\dotsc,\lambda_n(A))\in  \Gamma_{k}^+.$$
\end{definition}
Suppose $f$ is a function on $\Gamma_{k}^+$. Denote by $F=f(\lambda(A))$ the function on $\tilde{\Gamma}_{k}^+$ induced by $f$.  It is known~\cite{Caff4} that if $f$ is concave in $\Gamma_{k}^{+}$, then the induced function $F$ is concave in $\tilde{\Gamma}_{k}^{+}$.  For this reason, we shall denote $\tilde{\Gamma}_k^+$ by $\Gamma_k^+$ and $\sigma_k(\lambda(A))$ by $\sigma_k(A)$ when there is no possibility of confusion.

Notice that $\sigma_n(A)=\det (A)$. An equivalent definition of $\det (A)$ is
\[ \det A:=\frac{1}{n!}\delta^{i_1\dotsc i_n}_{j_1\dotsc j_n}A_{i_1j_1} \dotsm A_{i_nj_n}, \]
where $\delta^{i_1\dotsc i_n}_{j_1\dotsc j_n} $ is the generalized Kronecker delta; it is zero if $\{i_1,\dotsc,i_n\}\neq\{j_1,\dotsc,j_n\}$ and equals $1$ (resp.\ equals $-1$) if $(i_1,\dotsc,i_n)$ and $(j_1,\dotsc,j_n)$ differ by an even (resp.\ odd) permutation.  Similarly, an equivalent definition of $\sigma_k(A)$ is
$$\sigma_{k}(A):=\frac{1}{k!}\delta^{i_1\dotsc i_k}_{j_1\dotsc j_k}A_{i_1j_1} \dotsm A_{i_kj_k}.$$
The Newton transformation tensor is defined as
\[ T_{k}(A)_{ij}:=\frac{1}{k!}
\delta^{ii_1\dotsc i_k}_{jj_1\dotsc j_k}{(A)}_{i_1j_1} \dotsm {(A)}_{i_kj_k}.\]

\begin{definition}\label{2.3}
The polarization of $\sigma_k$ is
\[ \sigma_{k}(A_1,\dotsc,A_k):=\frac{1}{k!}\delta^{i_1\dotsc i_k}_{j_1\dotsc j_k}{(A_1)}_{i_1j_1} \dotsm {(A_k)}_{i_kj_k}.\]
\end{definition}

It is called the polarization of $\sigma_k$ because $\sigma_k(A_1,\dotsc,A_k)$ is the symmetric multilinear form such that $\sigma_k(A)=\sigma_k(A,\dotsc,A)$.

\begin{definition}\label{2.4}
The polarized Newton transformation tensor is
\[ T_{k}(A_1,\dotsc,A_k)_{ij}:=\frac{1}{k!}\delta^{ii_1\dotsc i_k}_{jj_1\dotsc j_k}{(A_1)}_{i_1j_1} \dotsm {(A_k)}_{i_kj_k}. \]
\end{definition}

When some components in the polarizations are the same, we adopt the notational conventions
\begin{align*}
 \sigma_{k}(\overbrace{B,\dotsc,B}^{l},C,\dotsc,C) & :=\sigma_{k}(\overbrace{B,\dotsc,B}^{l},\overbrace{C,\dotsc,C}^{k-l}), \\
 T_{k}(\overbrace{B,\dotsc,B}^{l},C,\dotsc,C)_{ij} & :=T_{k}(\overbrace{B,\dotsc,B}^{l},\overbrace{C,\dotsc,C}^{k-l})_{ij}.
\end{align*}

Some useful relations between the Newton transformation tensor $T_{k}$ and $\sigma_k$ are as follows.
For any symmetric matrix $A$, if we denote the trace by $\Tr$, then
\begin{align*}
 \sigma_{k}(A)&=\frac{1}{n-k}\Tr(T_{k}(A)_{ij}), \\
 \sigma_{k+1}(A)&=\frac{1}{k+1}\Tr(T_{k}(A)_{im}A_{mj} ).
\end{align*}

Many useful algebraic inequalities for elements of $\Gamma_{k}^+$ can be deduced from G{\aa}rding's theory of hyperbolic polynomials~\cite{Garding}.  For us, the important such inequality is the fact that if $A_1,\dotsc,A_k\in\bar{\Gamma}_{k+1}^{+}$, then $T_k(A_1,\dots,A_k)_{ij}$ is a nonnegative matrix.

\section{Construction of the polarized functional}
\label{sec:polarized}

We begin our construction of the boundary integrals of Proposition~\ref{prop:main_prop}.  Define
\begin{equation}
 \label{eqn:S0}
 \begin{split}
  S_0(u,w^1,\dotsc,w^k)&:= -2\sum_p \int_X u_i w_j^p T_{k-1}(D^2 w^{\w p})_{ij}dx\\
  &\quad-\sum_{p\neq q} \int_X w_i^p w_j^q T_{k-1}(D^2 u, D^2 w^{\w p,q})_{ij}dx.\\
 \end{split}
\end{equation}
where $D^2w^{\w p}$ denotes the list $(D^2w^1,\dotsc,D^2w^{p-1},D^2w^{p+1},\dotsc,D^2w^k)$ obtained from $(D^2w^1,\dotsc,D^2w^k)$ by removing the entry $D^2w^p$, and likewise $D^2 w^{\w p,q}$ denotes the list obtained from $(D^2w^1,\dotsc,D^2w^k)$ by removing the entries $D^2w^p$ and $D^2w^q$.  Similar notation will be used to remove more elements from the list.  Using integration by parts to rewrite~\eqref{eqn:S0} as a sum of an interior and a boundary integral, both of which have integrands which factor through $u$, yields the following first step towards proving Proposition~\ref{prop:main_prop}.

\begin{proposition}
 \label{prop:pre_main_prop}
 There exists a symmetric $\bR$-multilinear function $A_k\colon C^\infty(\overline{X})\to C^\infty(M)$ such that
\begin{equation}
 \label{eqn:L}
 L(u,w^1,\dotsc, w^k):= \int_X u \sigma_k(D^2 w^1,\dotsc, D^2 w^k) dx + \oint_M u A_k(w^1,\dotsc,w^k)d\mu
\end{equation}
is symmetric in  $u,w^1,\dotsc, w^k$.
\end{proposition}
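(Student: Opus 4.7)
The plan is to use the manifestly symmetric expression $S_0$ defined in~\eqref{eqn:S0} to extract the desired $A_k$. First, I verify that $S_0$ is itself symmetric in the $k+1$ arguments. Writing $v^0 := u$ and $v^p := w^p$ for $p \geq 1$, and using the $(i,j)$-symmetry of the polarized Newton tensor $T_{k-1}$, one rewrites
\[
    S_0 = -\sum_{0 \leq p \neq q \leq k} \int_X v^p_i \, v^q_j \, T_{k-1}(D^2 v^{\wedge p,q})_{ij} \, dx,
\]
which is manifestly invariant under arbitrary permutations of $v^0, \ldots, v^k$.

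Second, I establish the divergence-free identity $\partial_i T_{k-1}(D^2 \phi^1, \ldots, D^2 \phi^{k-1})_{ij} = 0$, valid for any smooth $\phi^1, \ldots, \phi^{k-1}$. This follows from the antisymmetry of the generalized Kronecker delta $\delta^{i i_1 \cdots i_{k-1}}_{j j_1 \cdots j_{k-1}}$ in its upper (and lower) indices, combined with the commutativity of mixed third-order partial derivatives. Pairing this identity with the polarization formula $(A_p)_{ij} T_{k-1}(A^{\wedge p})_{ij} = k \sigma_k(A_1, \ldots, A_k)$, I integrate by parts in each term of $S_0$. A single integration by parts in the first sum (moving $\partial_i$ off $u_i$) yields a contribution $2k^2 \int_X u \sigma_k \, dx$ together with boundary integrals whose integrand already factors as $u$ times an expression in $w^1, \ldots, w^k$ and their derivatives. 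The second sum, which contains $D^2 u$ inside $T_{k-1}$, requires two successive integrations by parts: one to strip derivatives from $w^p_i$, and a second to strip the derivatives from the resulting $D^2 u$ factor. The interior contributions sum to $c_k \int_X u \sigma_k(D^2 w^1, \ldots, D^2 w^k)\, dx$ with $c_k := k^2(k+1)$.

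The remaining boundary integrals may a priori contain $u\rv_M$, $u_i\rv_M$, and $u_{ij}\rv_M$. To re-express them in the form $\oint_M u \, A_k(w^1, \ldots, w^k) \, d\mu$ required by the proposition, I apply tangential integration by parts on the closed boundary $M$ to remove tangential derivatives of $u$, and use the Gauss-Weingarten formulae to decompose the remaining ambient derivatives into tangential and normal pieces. The symmetric structure of $S_0$---stemming from the specific coefficients $-2$ and $-1$ in~\eqref{eqn:S0}---forces the cancellations needed so that the reorganized boundary takes the form $\oint_M u \, A_k(w^1, \ldots, w^k)\, d\mu$ for a symmetric $\bR$-multilinear operator $A_k$ depending only on $w^1, \ldots, w^k$. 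Reading off $A_k$ and setting $L(u, w^1, \ldots, w^k) := \int_X u \sigma_k\, dx + \oint_M u \, A_k(w^1, \ldots, w^k)\, d\mu$ then yields the desired functional, whose symmetry in all $k+1$ arguments is inherited from that of $S_0$.

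The principal technical difficulty is in this final boundary reorganization: one must verify that every derivative-of-$u$ contribution can be absorbed into the form $\oint_M u \, A_k \, d\mu$. This requires careful matching of the boundary terms produced at each integration-by-parts step, and ultimately relies on the precise balance of coefficients in the definition of $S_0$.
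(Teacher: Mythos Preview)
Your overall strategy matches the paper's: start from the manifestly symmetric $S_0$, integrate by parts in $X$ to produce $c_k\int_X u\,\sigma_k\,dx$ plus a boundary functional $T$, and then massage $T$ into the form $\oint_M u\,A_k\,d\mu$ modulo symmetric pieces. The interior computation you sketch is correct, and your identification of $c_k=k^2(k+1)$ agrees with the paper.

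The genuine gap is in the boundary step. After the interior integrations by parts, $T$ contains terms such as
\[
\oint_M w^p\,u_j\,T_{k-1}(D^2 w^{\wedge p})_{jn}\,d\mu
\quad\text{and}\quad
\oint_M w^p\,w^q_j\,T_{k-1}(D^2 u,\,D^2 w^{\wedge p,q})_{jn}\,d\mu .
\]
The first carries a full derivative of $u$; the second carries $D^2u$ inside the Newton tensor. Your proposed tools---tangential integration by parts on $M$ together with the Gauss--Weingarten decomposition---can transfer \emph{tangential} derivatives $u_\alpha$ off of $u$, but they do nothing for the normal component $u_n$, and they do not by themselves extract $D^2u$ from inside $T_{k-1}(\,\cdot\,)_{jn}$. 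The assertion that ``the symmetric structure of $S_0$ forces the cancellations needed'' is precisely what has to be \emph{proved}; symmetry of $S_0$ only tells you that $T = S_0 - c_k\int u\,\sigma_k$ has a specific antisymmetry pattern, not that this pattern is realizable as $\oint_M u\,A_k$ minus something symmetric.

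The paper resolves this by an iterative symmetrize-and-subtract procedure: at each stage one isolates the offending $u_n$-term, forms its full symmetrization $S_i$ in $u,w^1,\dotsc,w^k$ (which is itself symmetric and hence may be subtracted harmlessly), and observes that the residual $Q_{i-1}-S_i$ splits into a piece already of the form $\oint_M u\,B$ and a new remainder $Q_i$ of strictly lower ``order'' (one more copy of $L$ inside $\sigma_{k-1}$, one more normal derivative $w^{p_j}_n$ outside). After $k$ such steps the process terminates. This explicit inductive mechanism is the content of the proof; without it the boundary reorganization remains an unfulfilled promise.
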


\begin{remark}
 The operators $A_k$ constructed by our proof depend on at most $4$ derivatives of their inputs.
\end{remark}

\begin{proof}
Note that $S_0$ is symmetric.  Our objective is to rewrite~\eqref{eqn:S0} in the desired form~\eqref{eqn:L}.  To that end, writing~\eqref{eqn:S0} as a sum over pairs $p\not=q$ and then integrating by parts in $X$ yields
\begin{align*}
S_0&=\sum_{p\neq q}\Bigl[-\frac{2}{k-1}
\int_X u_i w_j^p T_{k-1}(D^2 w^{\w p})_{ij} dx  -\int_X w_i^p w_j^q T_{k-1}(D^2 u, D^2 w^{\w p,q})_{ij} dx \Bigr]\\
&=\sum_{p\neq q}\Bigl[\frac{2}{k-1}
\int_X u w_{ij}^p T_{k-1}(D^2 w^{\w p})_{ij}dx+\int_X w^p u_{ij} T_{k-1}( D^2 w^{\w p})_{ij}dx\\
&\quad-\frac{2}{k-1}\oint_M u w_{j}^p T_{k-1}(D^2 w^{\w p})_{jn} d\mu -\oint_M w^p w_{j}^q T_{k-1}(D^2 u, D^2 w^{\w p,q })_{jn} d\mu\Bigr].
\end{align*}
Integrating by parts in $X$ once more yields
\begin{align*}
S_0&=\sum_{p\neq q}\Bigl[\frac{k+1}{k-1}
\int_X u w_{ij}^p T_{k-1}(D^2 w^{\w p})_{ij}dx-\frac{k+1}{k-1}\oint_M u w_{j}^p T_{k-1}(D^2 w^{\w p})_{jn}d\mu \\
&\quad-\oint_M w^p w_{j}^q T_{k-1}(D^2 u, D^2 w^{\w p,q })_{jn}d\mu
+\oint_M w^p u_j T_{k-1}(D^2 w^{\w p})_{jn}d\mu\Bigr].
\end{align*}
Denote the boundary integral by $T$:
\begin{equation}
 \label{6.1}
 \begin{split}
  T &= \sum_{p\neq q}\Bigl[\oint_M w^p u_j T_{k-1}( D^2 w^{\w p})_{jn} d\mu- \oint_M w^p w_j^q T_{k-1}(D^2 u, D^2 w^{\w p,q})_{jn}d\mu \Bigr] \\
   &\quad-(k+1)\sum_p\oint_M uw_j^pT_{k-1}(D^2w^{\w p})_{jn} d\mu.
 \end{split}
\end{equation}
Thus 
\[ S_0 = k^2(k+1) \int_X u \sigma_{k}(D^2 w^{1},\dotsc, D^2 w^k)dx + T . \]

We aim to write $T$ as the sum of a symmetric term and a boundary integral of the form $\oint uB(w^1,\dotsc, w^k)d\mu$.  To that end, consider the symmetrization of the second term of~\eqref{6.1}:
\begin{equation}\label{6.2}
\begin{split}
S_1&:=\sum_{p\neq q}\Bigl[ - \oint_M w^p w_j^q T_{k-1}(D^2 u, D^2 w^{\w p,q})_{jn}d\mu\\
&\quad-\frac{1}{k-1}\oint_M w^p u_j T_{k-1}( D^2 w^{\w p})_{jn}d\mu-\frac{1}{k-1}\oint_M  u w^p_j T_{k-1}( D^2 w^{\w p})_{jn}d\mu\Bigr].\\
\end{split}
\end{equation}
Note that $S_1$ is symmetric with respect to $u, w^1, \dotsc, w^k$.  Combining~\eqref{6.1} and~\eqref{6.2} yields
\[ T=S_1-k\sum_{p}\oint_M u w_j^p T_{k-1}(D^2 w^{\w p})_{jn} d\mu+ k\sum_{p}\oint_M w^p u_j T_{k-1}( D^2 w^{\w p})_{jn}d\mu . \]
We define
\begin{align*}
U_1 & := -k\sum_{p}\oint_M u w_j^p T_{k-1}(D^2 w^{\w p})_{jn} d\mu, \\
Q & := k\sum_{p} \oint w^p u_j T_{k-1}( D^2 w^{\w p})_{jn} d\mu,
\end{align*}
so that
\[ T= U_1+ S_1+ Q. \]
$U_1$ is of the correct form  $\oint u B(w^1,\cdots w^p) d\mu$.  We continue with the term $Q$.  Observe that
\[ Q=k\sum_{p}\Bigl[\oint_M w^p u_{\alpha} T_{k-1}( D^2 w^{\w p})_{\alpha n}d\mu + \oint_M w^p u_{n} T_{k-1}( D^2 w^{\w p})_{n n} d\mu\Bigr] . \]
where Greek indices $\alpha,\beta\in\{1,\dotsc,n-1\}$ denote tangential directions and $n$ denotes the outward-pointing normal along $M$.  By the definition of Newton tensor, $T_{k-1}( D^2 w^{\w p})_{n n}=\sigma_{k-1}(D^2 w\rv_{TM}^{\w p} )$, where $D^2w\rv_{TM}^{\w p}$ denotes the list of the restrictions $D^2w^1\rv_{TM},\dotsc,D^2w^n\rv_{TM}$ with the $p$-th element removed. Thus 
\[ Q=k\sum_{p}\Bigl[ \oint_M w^p u_{\alpha} T_{k-1}( D^2 w^{\w p})_{\alpha n}d\mu + \oint_M w^p u_{n} \sigma_{k-1}(D^2 w\rv_{TM}^{\w p} ) d\mu\Bigr] . \]
Define 
\begin{align*}
 U_2 & := k\sum_{p} \oint_M w^p u_\alpha T_{k-1}(D^2w^{\w p})_{\alpha n}d\mu, \\
 Q_1 & := k\sum_{p} \oint_M w^pu_{n} \sigma_{k-1}(D^2w\rv_{TM}^{\w p}) d\mu,
\end{align*}
so that
\[ Q = U_2 + Q_1 . \]
Integrating by parts along $M$ shows that 
\begin{equation*}U_2= -k\sum_{p} \oint u ( w^p T_{k-1}( D^2 w^{\w p})_{\alpha n})_\alpha d\mu.\\
\end{equation*}
Thus $U_2$ is of the correct form $\oint uB(w^1,\dotsc,w^p)d\mu$. Therefore we need only consider $Q_1$.

Consider the symmetrization of $Q_1$:
\begin{align*}
S_2&:=\sum_{p\neq q}\Bigl[ \frac{k}{k-1}\oint_M w^p u_{n} \sigma_{k-1}(D^2 w\rv_{T_xM}^{\w p} )d\mu + \frac{k}{k-1}\oint_M u w^p_{n} \sigma_{k-1}(D^2 w\rv_{TM}^{\w p} )d\mu\\
&\quad+k \oint_M w^p w^q_{n} \sigma_{k-1}(D^2 u\rv_{TM} , D^2 w\rv_{TM}^{\w p,q} ) d\mu\Bigr].\\
\end{align*}
Note that $S_2$ is symmetric with respect to $u, w^1, \dotsc, w^k$.  Moreover,
\begin{equation}\label{eqn:Q1_sym}
\begin{split}
Q_1 & =S_2 - \frac{k}{k-1}\sum_{p\neq q} \oint_M u w^p_{n} \sigma_{k-1}(D^2 w\rv_{TM}^{\w p} ) d\mu\\
& \quad - k\sum_{p\neq q} \oint_M w^p w^q_{n} \sigma_{k-1}(D^2 u\rv_{TM} , D^2 w\rv_{TM}^{\w p,q} )d\mu .
\end{split}
\end{equation}
Denote by $\bar{D}^2$ the Hessian with respect to the induced metric of $M$ and by $L_{\alpha\beta}$ the second fundamental form of $M$. Given $v\in C^\infty(\oX)$, it holds that
\begin{equation}
 \label{eqn:tangential_hessian}
 D^2 v\rv_{TM} = \bar{D}^2 v + v_nL
\end{equation}
along $M$.  Define
\begin{align*}
 U_3 & :=-\frac{k}{k-1}\sum_{p\neq q} \oint_M u w^p_{n} \sigma_{k-1}(D^2 w\rv_{TM}^{\w p} ) d\mu, \\
 U_4 & :=-k\sum_{p\neq q} \oint_M w^p w^q_{n} \sigma_{k-1}(\bar{D}^2 u , D^2 w\rv_{TM}^{\w p,q} )d\mu .
\end{align*}
Integrating by parts along $M$ yields
\[ U_4=-\frac{k}{k-1}\sum_{p\neq q} \oint_M u \big(w^p w^q_{n} T_{k-2}( D^2 w\rv_{TM}^{\w p,q} )_{\alpha\beta}\big)_{\bar\alpha\bar\beta} d\mu , \]
where the bars on $\alpha$ and $\beta$ denote covariant derivatives with respect to the induced metric on $M$.  In particular, both $U_3$ and $U_4$ are of the form $\oint uB(w^1,\dotsc,w^k)d\mu$.  Define
\[ Q_2:=-k\sum_{p\neq q} \oint_M w^p w^q_{n}u_n \sigma_{k-1}(L, D^2 w\rv_{TM}^{\w p,q} ) d\mu. \]
It follows from~\eqref{eqn:Q1_sym},~\eqref{eqn:tangential_hessian} and the definitions of $U_3, U_4, Q_2$  that
\[ Q_1=S_2+U_3+U_4+Q_2 . \]

Now we want to write $Q_2$ in the desired form.  To that end, consider the symmetrization of $Q_2$:
\begin{equation}\label{6.4}
\begin{split}
S_3&:=-k \sum_{p\neq q\neq r} \Bigl[\frac{1}{k-2}\oint_M w^p w^q_{n}u_n \sigma_{k-1}(L, D^2 w\rv_{TM}^{\w p,q} )d\mu \\
&\quad+\frac{1}{2!(k-2)}\oint_M u w^p_n w^q_{n} \sigma_{k-1}(L, D^2 w\rv_{TM}^{\w p,q} ) d\mu\\
&\quad+\frac{1}{2!}\oint_M w^p w^q_{n}w^r_n \sigma_{k-1}( L,D^2 u\rv_{TM}, D^2 w\rv_{TM}^{\w p,q, r} )d\mu \Bigr],\\
\end{split}
\end{equation}
Note that $S_3$ is symmetric with respect to $u,w^1,\dotsc,w^k$.  Define
\begin{align*}
 U_5 & :=\frac{k}{2!(k-2)}\sum_{p\neq q\neq r}\oint_M u w^p_n w^q_{n} \sigma_{k-1}(L, D^2 w\rv_{TM}^{\w p,q} ) d\mu,\\
 U_6 & :=\frac{k}{2!}\sum_{p\neq q\neq r}\oint_M w^p w^q_{n}w^r_n \sigma_{k-1}( L,\bar{D}^2 u\rv_{TM}, D^2 w\rv_{TM}^{\w p,q,r} ) d\mu.
\end{align*}
As above, integration by parts along $M$ implies that both $U_5$ and $U_6$ are of the form $\oint uB(w^1,\dotsc,w^k)d\mu$.  Define
\[ Q_3 := \frac{k}{2!}\sum_{p\neq q\neq r}\oint_M w^p w^q_{n}w^r_nu_n \sigma_{k-1}( L,L, D^2 w\rv_{TM}^{\w p,q,r} )d\mu . \]
From~\eqref{6.4} and the definitions of $Q_2$, $U_5$, $U_6$ and $Q_3$ we deduce that
\[ Q_2= S_3+U_5+U_6+Q_3 . \]

Proceeding in this way, for all $2\leq i\leq k$ we make the following definitions.  First, define
\begin{align*}
 S_i & := (-1)^ik \sum_{p_1\neq \dotsb \neq p_i}  \Bigl[\frac{1}{(i-2)!(k+1-i)}\oint_M w^{p_1}  w^{p_2}_{n}\dotsm w^{p_{i-1}}_n u_n \\
  &\qquad\qquad\qquad\qquad \times \sigma_{k-1}(\overbrace {L,\dotsc, L}^{i-2}, D^2 w\rv_{TM}^{\w p_1,\dotsc, p_{i-1}} ) d\mu\\
  &\quad+\frac{1}{(i-1)!(k+1-i)}\oint_M u w^{p_1}_n \dotsm w^{p_{i-1}}_n \sigma_{k-1}(\overbrace {L,\dotsc, L}^{i-2}, D^2 w\rv_{TM}^{\w p_1,\dotsc, p_{i-1}} )d\mu ,\\
  &\quad+\frac{1}{(i-1)!}\oint_M w^{p_1}  w^{p_2}_{n}\dotsm w^{p_{i}}_n  \sigma_{k-1}( \overbrace {L,\dotsc, L}^{i-2},D^2 u\rv_{TM}, D^2 w\rv_{TM}^{\w p_1,\dotsc, p_i} ) d\mu\Bigr] .
\end{align*}
Note that $S_i$ is symmetric with respect to $u,w^1,\dotsc,w^k$.  Next, define
\begin{align*}
 U_{2i-1}& := \frac{(-1)^{i+1}k}{(i-1)!(k+1-i)}\sum_{p_1\neq \dotsb \neq p_i}\oint_M u w^{p_1}_n \dotsm w^{p_{i-1}}_n  \\
&\quad \quad  \quad \quad \quad \quad \quad  \quad \quad \quad \quad \quad \quad  \quad \quad \times \sigma_{k-1}(\overbrace {L,\dotsc, L}^{i-2}, D^2 w\rv_{TM}^{\w p_1,\dotsc, p_{i-1}} ) d\mu,\\
 U_{2i}& := \frac{(-1)^{i+1}k}{(i-1)!}\sum_{p_1\neq \dotsb \neq p_i} \oint_M w^{p_1}  w^{p_2}_{n}\dotsm w^{p_i}_n \sigma_{k-1}(\overbrace {L,\dotsc, L}^{i-2},\bar{D}^2u, D^2 w\rv_{TM}^{\w p_1,\dotsc, p_i})d\mu .
\end{align*}
Integration by parts along $M$ implies that both $U_{2i-1}$ and $U_{2i}$ are of the form $\oint uB(w^1,\dotsc,w^k)d\mu$.  Then
\[ Q_i:= \frac{(-1)^{i+1} k}{(i-1)!}\sum_{p_1\neq \dotsb \neq p_i}\oint_M w^{p_1} w^{p_2}_{n}\dotsm w^{p_i}_n u_n \sigma_{k-1}(\overbrace {L,\dotsc, L}^{i-1}, D^2 w\rv_{TM}^{\w p_1,\dotsc,p_i} ) d\mu \]
is such that
\[ Q_{i-1} = S_i + U_{2i-1} + U_{2i} + Q_i . \]

It remains to write $Q_k$ as the sum of a symmetric integral and a boundary integral whose integrand factors through $u$.  To that end, define
\begin{align*}
 S_{k+1} & := \frac{(-1)^{k+1}k}{(k-1)!}\sum_{p_1\neq \dotsb \neq p_k} \Bigl[\oint_M w^{p_1}  w^{p_2}_{n}\dotsm w^{p_{k}}_n u_n\sigma_{k-1}(L)d\mu \\
 & \quad +\frac{1}{k}\oint_M u w^{p_1}_n \dotsm w^{p_{k}}_n \sigma_{k-1}(L )d\mu \Bigr] .
\end{align*}
Note that $S_{k+1}$ is symmetric with respect to $u,w^1,\dotsc,w^k$.  Also define
\[ U_{2k+1} := \frac{(-1)^{k}}{(k-1)!}\sum_{p_1\neq \dotsb \neq p_k}\oint_M u w^{p_1}_n \dotsm w^{p_{k}}_n \sigma_{k-1}(L)d\mu . \]
Note that $U_{2k+1}$ is of the form $\oint uB(w^1,\dotsc,w^k)d\mu$ and that
\[ Q_k = S_{k+1} + U_{2k+1} . \]

In summary, we have shown that
\begin{equation}
 \label{eqn:final_induction}
 S_0 - \sum_{i=1}^{k+1} S_i = k^2(k+1)\int_X u\,\sigma_{k}(D^2w^1,\dotsc,D^2w^{k}) dx+ \sum_{i=1}^{2k+1} U_i
\end{equation}
and observed that the left-hand side is symmetric in $u,w^1,\dotsc,w^k$ while the right-hand side is of the form $\oint uB(w^1,\dotsc,w^k)d\mu$.  Dividing~\eqref{eqn:final_induction} through by $k^2(k+1)$ yields~\eqref{eqn:L}.
\end{proof}

\section{Adjusted polarized functional}
\label{sec:adjusted}

The difference between Proposition~\ref{prop:main_prop} and Proposition~\ref{prop:pre_main_prop} is that in the latter result, we only ask that the boundary integrals making up the polarized functional are such that their integrands factor through $u$.  In particular, it is not clear that from the proof of Proposition~\ref{prop:pre_main_prop} that the functions $A_k$ depend only on at most second-order tangential derivatives and at most first-order transverse derivatives along $M$.  This arises in two ways.  First, the integral $U_1$ depends on the second-order derivative $w_{\alpha n}$.  Second, when written in the form $\oint uB(w^1,\dotsc,w^k) d\mu$, the integrals $U_{2i}$, $1\leq i\leq k$, depend also on third- and fourth-order derivatives of $w^p$.  By more carefully considering the integration by parts along $M$ invoked in the proof of Proposition~\ref{prop:pre_main_prop}, we show that the combination $\sum U_i$ only depends on at most second-order tangential derivatives and at most first-order transverse derivatives of $w^p$.  This proves Proposition~\ref{prop:main_prop}.  To that end, we first require a few facts.

\begin{lemma}
 \label{lem:boundary_derivatives}
 Let $X\subset\bR^n$ be a bounded smooth domain with boundary $M=\partial X$.  Let $w^1,\dotsc,w^k\in C^\infty(\oX)$.  Then
 \begin{align}
  \label{eqn:tangential_mixed} w_{\beta n} & = w_{n\bar\beta} - L_{\alpha\beta}w_\alpha, \\
  \label{eqn:Tktangential_mixed} T_k(D^2w^1,\dotsc,D^2w^k)_{\alpha n} & = -\frac{1}{k}\sum_{p=1}^k T_{k-1}(D^2w\rv_{TM}^{\w p})_{\alpha\beta} w_{\beta n}^p ,
 \end{align}
 where $\alpha,\beta\in\{1,\dotsc,n-1\}$ denote tangential directions, $n$ denotes the outward-pointing normal along the boundary, and $w_{n\bar\beta}$ denotes the tangential gradient of $w_n$.  Moreover,
 \begin{equation}
  \label{eqn:Tktangentialdivergence}
  T_k(\overbrace{L,\dotsc,L}^{i},D^2w\rv_{TM}^{\w p_1,\dotsc,p_i})_{\alpha\beta,\bar\beta} = \sum_{p\not=p_1,\dotsc,p_i} T_{k}(\overbrace{L,\dotsc,L}^{i+1},D^2w\rv_{TM}^{\w p,p_1,\dotsc,p_i})_{\alpha\beta}w_{\beta n}^p,
 \end{equation}
 where the left-hand side denotes the divergence with respect to the induced metric on $M$.
\end{lemma}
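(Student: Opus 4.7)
For the first two identities, the plan is relatively direct. For \equ{eqn:tangential_mixed}, I use the Weingarten relation: writing $w_n = \langle \nabla w,\nu\rangle$ for $\nu$ the outward unit normal and differentiating along the tangential direction $e_\beta$, the product rule yields $(w_n)_{\bar\beta} = \langle\nabla_{e_\beta}\nabla w,\nu\rangle + \langle\nabla w,\nabla_{e_\beta}\nu\rangle$; the first summand is $w_{\beta n}$ by the symmetry of the Hessian, while the Weingarten formula expresses $\nabla_{e_\beta}\nu$ in terms of the shape operator, yielding the correction $L_{\alpha\beta}w_\alpha$ with the sign dictated by the paper's convention for $L$. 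For \equ{eqn:Tktangential_mixed}, the approach is combinatorial: expanding $T_k(A_1,\dotsc,A_k)_{\alpha n}$ via the generalized Kronecker delta of Definition \ref{2.4}, I note that $\delta^{\alpha i_1\dotsc i_k}_{n j_1\dotsc j_k}$ vanishes unless $n$ appears among $i_1,\dotsc,i_k$ since $\alpha\ne n$. Splitting the sum by the position $p$ at which $i_p = n$, the antisymmetry of the delta in its upper indices makes all $k$ choices of $p$ equivalent up to sign; what remains is a $(k-1)$-index delta acting on purely tangential indices and contracted against $(A_p)_{n\beta} = w^p_{\beta n}$ for tangential $\beta$. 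Recognizing the residual as $T_{k-1}(D^2w|_{TM}^{\w p})_{\alpha\beta}$ and combining the $k$ equal contributions with the $1/k!$ normalization produces the $-1/k$ factor.

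For \equ{eqn:Tktangentialdivergence}, the plan is to apply $\bar\nabla_\beta$ via the product rule to the Kronecker-delta expansion of $T_k(L^i, D^2w|_{TM}^{\w p_1,\dotsc,p_i})_{\alpha\beta}$ and analyze each slot $q$ separately. For any slot with $A_q = L$, the Codazzi equation --- available because the ambient space is flat --- makes $L_{\alpha_q\beta_q;\bar\beta}$ totally symmetric in $\alpha_q,\beta_q,\beta$, so it vanishes against the antisymmetry of the delta in $(\beta_q,\beta)$. For any slot with $A_q = D^2 w^q|_{TM}$, I use the decomposition $D^2 w^q|_{TM} = \bar D^2 w^q + w^q_n L$ from \equ{eqn:tangential_hessian}. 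Differentiating the $w^q_n L$ piece, the $w^q_n L_{\alpha_q\beta_q;\bar\beta}$ contribution vanishes by Codazzi as above, while $w^q_{n,\bar\beta}L_{\alpha_q\beta_q}$ factors (since $w^q_{n,\bar\beta}$ is inert relative to the delta's summation) as $w^q_{n,\bar\beta}\cdot T_k(L^{i+1}, D^2 w|_{TM}^{\w q,p_1,\dotsc,p_i})_{\alpha\beta}$. Applying \equ{eqn:tangential_mixed} then converts $w^q_{n,\bar\beta}$ into the desired $w^q_{\beta n}$, at the cost of introducing an unwanted correction of the form $L_{\gamma\beta}w^q_{\bar\gamma}\cdot T_k(L^{i+1},\dotsc)_{\alpha\beta}$.

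The remaining contribution, from $\bar\nabla_\beta(\bar D^2 w^q)_{\alpha_q\beta_q}$, produces the third covariant derivative $w^q_{;\alpha_q\beta_q\bar\beta}$; its symmetric part in $(\beta_q,\beta)$ is killed by the delta, and its antisymmetric part is, via the Ricci identity on $M$, a Riemann curvature contraction $R^M{}_{\bar\beta\bar\beta_q\bar\alpha_q}{}^\delta w^q_{\bar\delta}$. The Gauss equation $R^M_{\alpha\beta\gamma\delta} = L_{\alpha\gamma}L_{\beta\delta} - L_{\alpha\delta}L_{\beta\gamma}$ --- available because the ambient space is $\R^n$ --- rewrites this as a quadratic-$L$ expression acting on $\bar\nabla w^q$. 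The main obstacle, and the heart of the argument, is to verify that these quadratic-$L$ terms, once contracted with the generalized Kronecker delta and summed over all Hessian slots $q$, precisely cancel the unwanted $L_{\gamma\beta}w^q_{\bar\gamma}$ correction from the $w^q_n L$ piece. I expect this cancellation to follow from a careful index-manipulation argument: one of the two Gauss terms factors out an $L$ with matching external index structure and cancels directly, while the other entangles external and slot-$q$ indices and must be processed by exploiting the antisymmetry of the delta in both its upper and its lower families to absorb the extra $L$ into the $T_k(L^{i+1},\dotsc)_{\alpha\beta}$ structure. Once this cancellation is verified, what survives is exactly $\sum_{p\ne p_1,\dotsc,p_i}T_k(L^{i+1}, D^2w|_{TM}^{\w p,p_1,\dotsc,p_i})_{\alpha\beta}w^p_{\beta n}$, as claimed.
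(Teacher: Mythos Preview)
Your treatment of \equ{eqn:tangential_mixed} and \equ{eqn:Tktangential_mixed} matches the paper's: both are immediate from the definitions of $L$ and of the Newton tensor.

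For \equ{eqn:Tktangentialdivergence} your plan is valid but takes a longer route than the paper's. The paper never decomposes $D^2w\rv_{TM}=\bar D^2 w + w_n L$; instead it records the single comparison formula
\[ w_{\alpha\beta,\gamma} = w_{\alpha\beta,\bar\gamma} + L_{\alpha\gamma}w_{\beta n} + L_{\beta\gamma}w_{\alpha n} \]
relating the flat derivative of $D^2w\rv_{TM}$ to its $M$-covariant derivative, and inserts this directly into the definition of the Newton tensor. After the product rule and Codazzi (handling the $L$-slots exactly as you do), the Hessian-slot contribution has the totally symmetric flat third partial $w_{\alpha_s\beta_s\beta}$ and the $(\beta,\beta_s)$-symmetric term $L_{\beta_s\beta}w_{\alpha_s n}$ both killed by the delta's antisymmetry, while the surviving $-L_{\alpha_s\beta}w^p_{\beta_s n}$ becomes $+L_{\alpha_s\beta_s}w^p_{\beta n}$ under the same swap --- which is exactly the right-hand side. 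No Ricci identity, no Gauss equation, no cancellation to verify. Your decomposition instead forces you through the intrinsic third covariant derivative on $M$, hence through curvature; the cancellation you anticipate does hold (each of the two Gauss terms contributes $-\tfrac12$ of the unwanted correction after the $(\beta,\beta_s)$ swap), but you leave it as an expectation rather than carrying it out, and it is precisely the work the paper's approach sidesteps by differentiating $D^2w\rv_{TM}$ as a single tensor rather than splitting off $\bar D^2 w$.
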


\begin{proof}
 \eqref{eqn:tangential_mixed} follows immediately from the definition of the second fundamental form $L$ and~\eqref{eqn:Tktangential_mixed} follows immediately from the definitions of the Newton tensors.  To prove~\eqref{eqn:Tktangentialdivergence}, first recall that the Newton tensors are divergence-free with respect to the flat metric in $X$.  From the definition of the second fundamental form, we have that
 \[ w_{\alpha\beta,\gamma} = w_{\alpha\beta,\bar\gamma} + L_{\alpha\gamma}w_{\beta n} + L_{\beta\gamma}w_{\alpha n} . \]
 Inserting this into the definition of the Netwon tensors yields the result (cf.\ \cite{Chen2009}*{Lemma~11}).
\end{proof}

Lemma~\ref{lem:boundary_derivatives} allows us to carefully perform the integration by parts argument as described above.

\begin{proof}[Proof of Proposition~\ref{prop:main_prop}]
 Denote $\mC:=C^1(\oX)\cap C^2(M)$.  Define
 \begin{align*}
  \tilde U_1 & := -k\sum_p \oint_M uw_n^p\sigma_{k-1}(D^2w\rv_{TM}^{\w p}) d\mu, \\
  \hat U_1 & := -k\sum_p \oint_M uw_\alpha^p T_{k-1}(D^2w)_{\alpha n} d\mu .
 \end{align*}
 It follows from~\eqref{eqn:tangential_hessian} that $\tilde U_1$ is well-defined on $\mC$; i.e.\ $\tilde U_1$ depends on at most second-order tangential derivatives and first-order transverse derivatives of $w^1,\dotsc,w^k$ on $M$.  Furthermore, we have that
 \[ U_1 = \tilde U_1 + \hat U_1 . \]

 Consider now $\hat U_1+U_2+U_4$.  Define
 \begin{align*}
  W_1 & := -\frac{k}{k-1}\sum_{p\neq q}\oint_M uw_\alpha^p T_{k-2}(D^2w\rv_{TM}^{\w p,q})_{\alpha\beta}L_{\beta\gamma}w_\gamma^q  d\mu, \\
  W_2 & := -\frac{k}{k-1}\sum_{p\neq q}\oint_M uw_n^p T_{k-2}(D^2w\rv_{TM}^{\w p,q})_{\alpha\beta}w_{\bar\alpha\bar\beta}^q d\mu, \\
  W_3 & := \frac{k}{k-1}\sum_{p\neq q\neq r}\oint_M uw_n^pw_\alpha^qT_{k-2}(L,D^2w\rv_{TM}^{\w p,q,r})_{\alpha\beta}L_{\beta\gamma}w_\gamma^r d\mu.
 \end{align*}
 It follows from~\eqref{eqn:tangential_hessian} that $W_1,W_2,W_3$ are well-defined on $\mC$.  Define also
 \begin{align*}
  V_1 & := \frac{k}{k-1}\sum_{p\neq q}\oint_M w^pu_\alpha T_{k-2}(D^2w\rv_{TM}^{\w p,q})_{\alpha\beta}L_{\beta\gamma}w_\gamma^qd\mu, \\
  V_2 & := -\frac{k}{k-1}\sum_{p\neq q\neq r} \oint_M w^pw_n^qu_\alpha T_{k-2}(L,D^2w\rv_{TM}^{\w p,q,r})_{\alpha\beta} L_{\beta\gamma} w_\gamma^rd\mu,
 \end{align*}
 Note that $V_1$ and $V_2$ still involve derivatives of $u$; this issue will be dealt with later.  Integrating by parts along $M$ and using Lemma~\ref{lem:boundary_derivatives} yields
 \begin{align*}
  \hat U_1 + U_2 + U_4 & = W_1 + V_1 + \frac{k}{k-1}\sum_{p\neq q}\oint_M uw_\alpha^p T_{k-2}(D^2w\rv_{TM}^{\w p,q})_{\alpha\beta}w_{n\bar\beta}^q d\mu\\
   & \quad - \frac{k}{k-1}\sum_{p\neq q}\oint_M w^p T_{k-2}(D^2w\rv_{TM}^{\w p,q})_{\alpha\beta}(u_\alpha w_n^q)_{\bar\beta} d\mu\\
  & = W_1 + V_1 + V_2 + \frac{k}{k-1}\sum_{p\neq q}\oint_M w_\alpha^p T_{k-2}(D^2w\rv_{TM}^{\w p,q})_{\alpha\beta}(uw_n)_{\bar\beta}d\mu \\
   & \quad + \frac{k}{k-1}\sum_{p\neq q\neq r} \oint_M w^pw_n^qu_\alpha T_{k-2}(L,D^2w\rv_{TM}^{\w p,q,r})_{\alpha\beta}w_{n\bar\beta}^rd\mu \\
  & = W_1 + W_2 + W_3 + V_1 + V_2 + \hat U_2 + \hat U_3 ,
 \end{align*}
 where
 \begin{align*}
  \hat U_2 & := -\frac{k}{k-1}\sum_{p\neq q\neq r} \oint_M uw_n^pw_\alpha^q T_{k-2}(L,D^2w\rv_{TM}^{\w p,q,r})_{\alpha\beta}w_{n\bar\beta}^rd\mu, \\
  \hat U_3 & := \frac{k}{k-1}\sum_{p\neq q\neq r} \oint_M w^pw_n^qu_\alpha T_{k-2}(L,D^2w\rv_{TM}^{\w p,q,r})_{\alpha\beta}w_{n\bar\beta}^rd\mu .
 \end{align*}

 We continue this process by considering $\hat U_2+\hat U_3+U_6$.  More generally, given $1\leq i\leq k-1$, we make the following definitions.  First, define
 \begin{align*}
  W_{2i-1} & := (-1)^i\frac{k}{k-1}\sum_{p_0\neq\dotsb\neq p_i} \frac{1}{(i-1)!}\oint_M uw_\alpha^{p_0}w_n^{p_1}\dotsm w_n^{p_{i-1}}
 \\ & \quad \quad \quad  \quad \quad \quad \quad \quad \quad \quad \quad \quad \quad \times
T_{k-2}(\overbrace{L,\dotsc,L}^{i-1},D^2w\rv_{TM}^{\w p_0,\dotsc,p_i})_{\alpha\beta}L_{\beta\gamma}w_\gamma^{p_{i}}d\mu, \\
  W_{2i} & := (-1)^i\frac{k}{k-1}\sum_{p_0\neq\dotsb\neq p_{i}}\frac{1}{i!}\oint_M uw_n^{p_0}\dotsm w_n^{p_{i-1}} \\
   & \quad\quad\quad\quad\quad\quad\quad\quad\quad\quad\quad\quad\quad\times T_{k-2}(\overbrace{L,\dotsc,L}^{i-1},D^2w\rv_{TM}^{\w p_0,\dotsc,p_{i}})_{\alpha\beta}w_{\bar\alpha\bar\beta}^{p_{i}} d\mu.
 \end{align*}
 It follows from~\eqref{eqn:tangential_hessian} that $W_{2i-1}$ and $W_{2i}$ are well-defined on $\mC$.  Next, define
 \begin{align*} V_i &:= (-1)^{i+1}\frac{k}{k-1}\sum_{p_0\neq\dotsb\neq p_{i}} \frac{1}{(i-1)!}\oint_M u_\alpha w^{p_0}w_n^{p_1}\dotsm w_n^{p_{i-1}} \\ &\quad \quad \quad \quad \quad \quad  \quad \quad \quad \quad \quad \quad \quad \quad \quad \times
T_{k-2}(\overbrace{L,\dotsc,L}^{i-1},D^2w\rv_{TM}^{\w p_0,\dotsc,p_{i}})_{\alpha\beta}L_{\beta\gamma}w_\gamma^{p_{i}}d\mu .\\ \end{align*}
 Note that $V_i$ still involves derivatives of $u$; this issue will be dealt with later.  Finally, define
 \begin{align*}
  \hat U_{2i} & := (-1)^i\frac{k}{k-1}\sum_{p_0\neq\dotsb\neq p_{i+1}} \frac{1}{i!}\oint_M uw_\alpha^{p_0}w_n^{p_1}\dotsm w_n^{p_i}
 \\ & \quad \quad \quad  \quad \quad \quad \quad \quad \quad \quad \quad \quad \quad \times T_{k-2}(\overbrace{L,\dotsc,L}^i,D^2w\rv_{TM}^{\w p_0,\dotsc,p_{i+1}})_{\alpha\beta}w_{n\bar\beta}^{p_{i+1}}d\mu, \\
  \hat U_{2i+1} & := (-1)^{i+1}\frac{k}{k-1}\sum_{p_0\neq\dotsb\neq p_{i+1}} \frac{1}{i!}\oint_M u_\alpha w^{p_0}w_n^{p_1}\dotsm w_n^{p_i}
 \\
& \quad \quad \quad  \quad \quad \quad \quad \quad \quad \quad \quad \quad \quad \times T_{k-2}(\overbrace{L,\dotsc,L}^i,D^2w\rv_{TM}^{\w p_0,\dotsc,p_{i+1}})_{\alpha\beta}w_{n\bar\beta}^{p_{i+1}}d\mu ;
 \end{align*}
 note that $\hat U_{2k-2}=\hat U_{2k-1}=0$.  Integrating by parts along $M$ and using Lemma~\ref{lem:boundary_derivatives} yields
 \[ \hat U_{2i} + \hat U_{2i+1} + U_{2i+4} = V_{i+2} + W_{2i+2} + W_{2i+3} + \hat U_{2i+2} + \hat U_{2i+3} . \]
 In particular, it follows that
 \begin{equation}
  \label{eqn:Usimplification}
  \sum_{i=1}^{2k+1} U_i = \tilde U_1 + \sum_{i=1}^k U_{2i+1} + \sum_{i=1}^{2k-2}W_i + \sum_{i=1}^{k-1} V_i .
 \end{equation}
 Note that $\tilde U_1$, $\sum U_{2i+1}$, and $\sum W_i$ are all well-defined on $\mC$.  It remains to check that, after integration by parts, $\sum V_i$ can be written as a boundary integral with integrand the product of $u$ with a function which is well-defined on $\mC$.

 Given $1\leq i\leq k-1$, define
 \begin{align*}
  A_i & := (-1)^i\frac{k}{(i-1)!(k-1)}\sum_{p_0\neq\dotsb\neq p_{i+1}} \oint_M uw^{p_0}w_n^{p_1}\dotsm w_n^{p_{i-1}} w_\alpha^{p_i} w_{n\bar\beta}^{p_{i+1}} \\
& \quad \quad \quad \quad \quad  \quad \quad \quad \quad \quad \quad \quad \quad \quad \quad \times T_{k-2}(\overbrace{L,\dotsc,L}^i,D^2w\rv_{TM}^{\w p_0,\dotsc,p_{i+1}})_{\alpha\gamma}L_{\gamma\beta} d\mu, \\
  B_i & := (-1)^{i+1}\frac{k}{(i-1)!(k-1)}\sum_{p_0\neq\dotsb\neq p_{i+1}} \oint_M uw^{p_0}w_n^{p_1}\dotsm w_n^{p_{i-1}} w_\gamma^{p_i} w_\delta^{p_{i+1}} 
 \\
& \quad \quad \quad  \quad \quad \quad \quad \quad \quad \quad \quad \quad \quad \times T_{k-2}(\overbrace{L,\dotsc,L}^i,D^2w\rv_{TM}^{\w p_0,\dotsc,p_{i+1}})_{\alpha\beta} L_{\alpha\gamma} L_{\beta\delta} d\mu, \\
  C_i & := (-1)^i\frac{k}{(i-1)!(k-1)}\sum_{p_0\neq\dotsb\neq p_i} \oint_M uw_n^{p_0}\dotsm w_n^{p_{i-2}}
 \\
&\quad \quad \quad \quad \quad  \quad  \quad \quad \quad \quad \times T_{k-2}(\overbrace{L,\dotsc,L}^{i-1},D^2w\rv_{TM}^{\w p_0,\dotsc,p_i})_{\alpha\beta}\left(w^{p_{i-1}}w_\gamma^{p_i}L_{\alpha\gamma}\right)_{\bar\beta} d\mu.
 \end{align*}
 Note that $B_i$ and $C_i$ are well-defined on $\mC$.  Moreover, integration by parts along $M$ readily yields
 \[ V_i = A_i - A_{i-1} + B_i + C_i, \]
 where we interpret $A_0=0$.  Since $A_{k-1}=0$, it follows that
 \begin{equation}
  \label{eqn:Vsimplification}
  \sum_{i=1}^{k-1} V_i = \sum_{i=1}^{k-1} \left( B_i + C_i\right) .
 \end{equation}
 Combining~\eqref{eqn:Usimplification} and~\eqref{eqn:Vsimplification} yields the desired result.
\end{proof}

\section{The first and second variation}
\label{sec:variation}

It is straightforward to compute the first and second variations of the energy functional
\[ \mE_k(u) := \mQ_k(u,\dotsc,u) \]
associated to the symmetric multilinear form constructed by Proposition~\ref{prop:main_prop}.

\begin{proposition}
 \label{prop:first_variation}
 Let $X\subset\bR^n$ be a bounded smooth domain with boundary $M=\partial X$.  Let $u,v\in C^\infty(\oX)$ and suppose that $v\rv_M=0$.  Then
 \begin{equation}
  \label{eqn:first_variation}
  \left.\frac{d}{dt}\right|_{t=0}\mE_k(u+tv) = -(k+1)\int_X v\,\sigma_k(D^2u,\dotsc,D^2u) dx.
 \end{equation}
\end{proposition}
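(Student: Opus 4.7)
The proof should be essentially immediate from the structure set up in Proposition~\ref{prop:main_prop}, so the plan is very short. The key observation is that $\mE_k$ is the diagonal restriction of a symmetric $(k+1)$-linear form.

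First I would expand $\mE_k(u+tv) = \mQ_k(u+tv,\dotsc,u+tv)$ using multilinearity. Since $\mQ_k$ is symmetric in its $k+1$ arguments, every term in the expansion that contains exactly one copy of $v$ equals $\mQ_k(v,u,\dotsc,u)$, and there are $k+1$ such terms. Differentiating at $t=0$ kills the terms with two or more copies of $v$ and yields
\begin{equation*}
 \left.\frac{d}{dt}\right|_{t=0}\mE_k(u+tv) = (k+1)\,\mQ_k(v,u,\dotsc,u).
\end{equation*}

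Next I would substitute the explicit formula~\eqref{eqn:k-energy} for $\mQ_k$ with $v$ placed in the first slot:
\begin{equation*}
 \mQ_k(v,u,\dotsc,u) = -\int_X v\,\sigma_k(D^2u,\dotsc,D^2u)\,dx + \oint_M v\,B_k(u,\dotsc,u)\,d\mu.
\end{equation*}
The hypothesis $v\rv_M=0$ makes the boundary integral vanish identically, leaving precisely the right-hand side of~\eqref{eqn:first_variation} after multiplying by $k+1$.

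There is no real obstacle here; the entire content has been packaged into Proposition~\ref{prop:main_prop}. The only thing to be careful about is that the formula~\eqref{eqn:k-energy} was stated with $u$ in the distinguished first slot, but since $\mQ_k$ is symmetric we are free to put $v$ there instead, which is exactly what makes the hypothesis $v\rv_M=0$ collapse the boundary contribution.
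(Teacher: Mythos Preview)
Your proposal is correct and follows exactly the same approach as the paper: use the symmetry of $\mQ_k$ to identify the first variation as $(k+1)\mQ_k(v,u,\dotsc,u)$, then invoke $v\rv_M=0$ to kill the boundary term in~\eqref{eqn:k-energy}.
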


\begin{proof}
 Since $\mQ_k$ is symmetric, we compute that
 \[ \left.\frac{d}{dt}\right|_{t=0}\mE_k(u+tv) = (k+1)\mQ_k(v,u,\dotsc,u) . \]
 Since $v\rv_M=0$, we see that the boundary integral in~\eqref{eqn:k-energy} vanishes.  This yields~\eqref{eqn:first_variation}.
\end{proof}

\begin{proposition}
 \label{prop:second_variation}
 Let $X\subset\bR^n$ be a bounded smooth domain with boundary $M=\partial X$.  Let $u,v\in C^\infty(\oX)$ and suppose that $v\rv_M=0$.  Then
 \[ \left.\frac{d^2}{dt^2}\right|_{t=0}\mE_k(u+tv) = (k+1)\int_X v_iv_j T_{k-1}(D^2u)_{ij}dx . \]
 In particular, if $u\in\overline{\Gamma_k^+}$, then
 \[ \left.\frac{d^2}{dt^2}\right|_{t=0}\mE_k(u+tv)\geq0 \]
 for all $v\in C^\infty(\oX)$ such that $v\rv_M=0$.
\end{proposition}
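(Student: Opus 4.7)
The plan is to mirror the structure of Proposition~\ref{prop:first_variation} but carry the symmetry of $\mQ_k$ one step further. Since $\mE_k(u)=\mQ_k(u,\dotsc,u)$ and $\mQ_k$ is a symmetric $(k+1)$-linear form, expanding the Taylor polynomial of $t\mapsto\mE_k(u+tv)$ gives
\[ \left.\frac{d^2}{dt^2}\right|_{t=0}\mE_k(u+tv) = k(k+1)\,\mQ_k(v,v,u,\dotsc,u). \]
Because $v\rv_M=0$, the boundary integral in~\eqref{eqn:k-energy} vanishes identically when the leading slot is filled with $v$, so this reduces to
\[ -k(k+1)\int_X v\,\sigma_k(D^2v,D^2u,\dotsc,D^2u)\,dx. \]

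Next, I would use the polarization identity to rewrite the integrand. Directly from Definition~\ref{2.3} and Definition~\ref{2.4},
\[ k\,\sigma_k(D^2v,D^2u,\dotsc,D^2u) = v_{ij}\,T_{k-1}(D^2u)_{ij}, \]
so the second variation becomes $-(k+1)\int_X v\,v_{ij}\,T_{k-1}(D^2u)_{ij}\,dx$. Now integrate by parts in $X$: since the Newton tensor $T_{k-1}(D^2u)$ is divergence-free with respect to the flat metric (this is the input used, e.g., in the proof of Lemma~\ref{lem:boundary_derivatives}) and since $v\rv_M=0$, the divergence-of-the-tensor term and the boundary term both vanish, leaving
\[ -\int_X v\,v_{ij}\,T_{k-1}(D^2u)_{ij}\,dx = \int_X v_i v_j\,T_{k-1}(D^2u)_{ij}\,dx, \]
which yields the claimed formula.

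For the nonnegativity, I would invoke the G{\aa}rding-theoretic fact recalled at the end of Section~\ref{sec:bg}: if $D^2u\in\overline{\Gamma_k^+}$, then $T_{k-1}(D^2u)_{ij}$ is a nonnegative symmetric matrix pointwise, so the integrand $v_iv_jT_{k-1}(D^2u)_{ij}$ is pointwise nonnegative and the integral is nonnegative.

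The calculation is essentially routine once the polarization identity and the divergence-free property are in hand; the only mild subtlety is the vanishing of the boundary term in the integration by parts, which requires precisely the hypothesis $v\rv_M=0$. There is no genuine obstacle beyond bookkeeping with the combinatorial constants from polarization.
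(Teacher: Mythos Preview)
Your argument is correct and follows the paper's proof essentially line for line: symmetry of $\mQ_k$ gives the $k(k+1)\mQ_k(v,v,u,\dotsc,u)$ expression, the boundary term drops because $v\rv_M=0$, the polarization identity $k\sigma_k(D^2v,D^2u,\dotsc,D^2u)=v_{ij}T_{k-1}(D^2u)_{ij}$ converts the integrand, and integration by parts (using that $T_{k-1}(D^2u)$ is divergence-free and $v\rv_M=0$) yields the final formula. The only difference is that you spell out the justification for the integration-by-parts step, which the paper leaves implicit.
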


\begin{proof}
 Since $\mQ_k$ is symmetric, we compute that
 \[ \left.\frac{d^2}{dt^2}\right|_{t=0}\mE_k(u+tv) = k(k+1)\mQ_k(v,v,u,\dotsc,u) . \]
 Since $v\rv_M=0$, it follows that
 \begin{align*}
  \left.\frac{d^2}{dt^2}\right|_{t=0}\mE_k(u+tv) & = -k(k+1)\int_X v\,\sigma_k(D^2v,D^2u,\dotsc,D^2u) dx, \\
  & = -(k+1)\int_X v T_{k-1}(D^2u)_{ij} v_{ij}dx \\
  & = (k+1)\int_X v_iv_j T_{k-1}(D^2u)_{ij} dx.
 \end{align*}
 The last conclusion follows from the fact that if $u\in\overline{\Gamma_k^+}$, then $T_{k-1}(D^2u)_{ij}$ is nonnegative.
\end{proof}

We are now ready to prove Theorem~\ref{thm:main_thm}, which we restate here for convenience.

\begin{theorem}
 Let $X\subset\bR^n$ be a bounded smooth domain with $(k-1)$-convex boundary $M=\partial X$.  Fix $f\in C^\infty(M)$ and denote
 \[ \mC_{f,k} = \left\{ u\in \Gamma_k^+\suchthat u\rv_M = f \right\} . \]
 Then
 \[ \mE_k(u) \geq \mE_k(u_f) \]
 for all $u\in\overline{\mC_{f,k}}$, where $u_f\in\overline{\mC_{f,k}}$ is the solution to the Dirichlet problem
 \begin{equation}
  \label{eqn:dirichlet_problem_proof}
  \begin{cases}
    \sigma_k(u_f) = 0, & \text{in $X$}, \\
    u_f = f, & \text{on $M$}.
   \end{cases}
 \end{equation}
\end{theorem}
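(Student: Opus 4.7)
I would prove the inequality by a convexity argument along the straight-line path from $u_f$ to $u$. Fix $u \in \overline{\mC_{f,k}}$, set $v := u - u_f$, and define $\gamma(t) := u_f + tv = (1-t)u_f + tu$ for $t \in [0,1]$. Because $\overline{\Gamma_k^+}$ is a convex cone (Section~\ref{sec:bg}), $D^2\gamma(t) \in \overline{\Gamma_k^+}$ for every $t \in [0,1]$; moreover $v\rv_M = 0$. Let $\phi(t) := \mE_k(\gamma(t))$. By multilinearity of $\mQ_k$, $\phi$ is a polynomial in $t$ of degree at most $k+1$, and it suffices to establish $\phi'(0) = 0$ together with $\phi''(t) \geq 0$ on $[0,1]$, since these give $\phi(1) \geq \phi(0)$, which is precisely the inequality to be proved.

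\textbf{First and second variations.} Symmetry of $\mQ_k$ gives
\[ \phi'(t) = (k+1)\mQ_k(v,\gamma(t),\ldots,\gamma(t)), \qquad \phi''(t) = k(k+1)\mQ_k(v,v,\gamma(t),\ldots,\gamma(t)). \]
Since $v\rv_M = 0$, the boundary integrals in \eqref{eqn:k-energy} drop out of both expressions, and Propositions~\ref{prop:first_variation} and~\ref{prop:second_variation} yield
\[ \phi'(t) = -(k+1)\int_X v\,\sigma_k(D^2\gamma(t))\,dx, \qquad \phi''(t) = (k+1)\int_X v_i v_j\,T_{k-1}(D^2\gamma(t))_{ij}\,dx, \]
the second after a single integration by parts using the divergence-free property of the Newton tensor in flat space. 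Since $u_f$ satisfies \eqref{eqn:dirichlet_problem_proof}, we obtain $\phi'(0) = 0$; and since $D^2\gamma(t) \in \overline{\Gamma_k^+}$, the G{\aa}rding-type inequality recalled in Section~\ref{sec:bg} gives $T_{k-1}(D^2\gamma(t)) \geq 0$ as a matrix, hence $\phi''(t) \geq 0$ on $[0,1]$. Convexity of $\phi$ together with the vanishing of $\phi'(0)$ then yields $\phi(1) \geq \phi(0)$, which is the claim.

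\textbf{Main obstacle.} Propositions~\ref{prop:first_variation} and~\ref{prop:second_variation} are stated for smooth base points, whereas elements of $\overline{\mC_{f,k}}$---and in particular the solution $u_f$ to the degenerate Dirichlet problem \eqref{eqn:dirichlet_problem_proof}---are only known to lie in $C^{1,1}(\oX)$. The key technical task is therefore to check that every quantity above makes sense and the variational identities remain valid at $C^{1,1}$ base points with smooth trace $f$: the Hessians belong to $L^\infty$ so the interior integrals are unambiguously defined; the boundary operator $B_k$ of Proposition~\ref{prop:main_prop} requires only the regularity $C^1(\oX)\cap C^2(M)$, which is built into the class $\overline{\mC_{f,k}}$; the identity $\sigma_k(D^2 u_f) = 0$ holds almost everywhere by Alexandrov's theorem; and the divergence-free property of the Newton tensor extends to the $C^{1,1}$ setting by approximation. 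Equivalently, one may approximate $u$ in $C^{1,1}(\oX)$ by elements of $\mC_{f,k}$ and pass to the limit using continuity of $\mE_k$ in this topology, thereby reducing to the smooth case already handled by the variational formulas.
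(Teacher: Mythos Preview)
Your proposal is correct and follows essentially the same route as the paper: both arguments run the straight-line path $t\mapsto (1-t)u_f + tu$ inside the convex set $\overline{\mC_{f,k}}$, invoke Propositions~\ref{prop:first_variation} and~\ref{prop:second_variation} to get $\phi'(0)=0$ and $\phi''\geq 0$, and conclude $\phi(1)\geq\phi(0)$. Your treatment of the $C^{1,1}$ regularity issue is more explicit than the paper's, but the underlying strategy is identical.
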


\begin{proof}
 By Proposition~\ref{prop:first_variation}, the solution $u_f$ to~\eqref{eqn:dirichlet_problem_proof} is a critical point of the functional $\mE_k\colon C^{1,1}(\oX)\to\bR$.  By Proposition~\ref{prop:second_variation}, the restriction $\mE_k\colon\overline{\mC_{f,k}}\to\bR$ is a convex functional.  Since $\overline{\mC_{f,k}}$ is convex, $u_f$ realizes the infimum of $\mE_k\colon\overline{\mC_{f,k}}\to\bR$.  Indeed, if not, then there is a $u\in\overline{\mC_{f,k}}$ such that $\mE_k(u)<\mE_k(u_f)$.  Since $\overline{\mC_{f,k}}$ is convex, it follows that $tu+(1-t)u_f\in\overline{\mC_{f,k}}$ for all $t\in[0,1]$.  Denote $\mE_k(t):=\mE_k(tu+(1-t)u_f)$.  Since $\mE_k(u)<\mE_k(u_f)$, there exists a $t^\ast\in[0,1]$ such that $\mE_k^\prime(t^\ast)<0$.  This contradicts the facts that $\mE_k^\prime(0)=0$ and $\mE_k^{\prime\prime}\geq0$ for all $t\in[0,1]$.
\end{proof}

\section{The case $k=2$}
\label{sec:example}

We conclude this article by considering the specific case $k=2$; the case $k=1$ is covered by~\eqref{eqn:model_trace}.  First, a suitable boundary operator as in Proposition~\ref{prop:main_prop} is given by Proposition~\ref{prop:main_prop2}, which we restate here for convenience.

\begin{proposition}
 \label{prop:boundary2}
 Let $X\subset\bR^n$ be a bounded smooth domain with boundary $M=\partial X$.  Define $B\colon\left(C^1(\oX)\cap C^2(M)\right)^2\to C^0(M)$ by
 \begin{equation}
  \label{eqn:boundary2}
  B_2(v,w) = \frac{1}{2}\left(v_n\oDelta w + w_n\oDelta v + L(\onabla v,\onabla w) + Hv_nw_n\right) .
 \end{equation}
 Then the multilinear form $\mQ_2\colon\left(C^2(\oX)\right)^3\to\bR$ given by
 \[ \mQ_2(u,v,w) = -\int_X u\sigma_2(D^2v,D^2w) dx+ \oint_M u B_2(v,w)d\mu \]
 is symmetric.
\end{proposition}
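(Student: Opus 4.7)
Since $\sigma_2(D^2v, D^2w)$ and $B_2(v,w)$ are both symmetric in $(v,w)$, we automatically have $\mQ_2(u,v,w)=\mQ_2(u,w,v)$. The plan is to verify the remaining transposition $\mQ_2(u,v,w)=\mQ_2(v,u,w)$ by establishing the boundary identity
\[
\int_X\bigl(u\,\sigma_2(D^2v,D^2w)-v\,\sigma_2(D^2u,D^2w)\bigr)\,dx
=\oint_M\bigl(uB_2(v,w)-vB_2(u,w)\bigr)\,d\mu;
\]
composing the two transpositions then yields full symmetry of $\mQ_2$.

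To produce the interior difference I would use the polarization $\sigma_2(D^2v,D^2w)=\tfrac12 v_{ij}T_1(D^2w)_{ij}$ with Newton tensor $T_1(D^2w)_{ij}=\Delta w\,\delta_{ij}-w_{ij}$, which is symmetric and, crucially, divergence-free. Integrating by parts twice in $X$---first moving $\partial_i$ off $v_{ij}$, then $\partial_j$ off the resulting $v_j$---the divergence-free property kills every interior remainder and leaves
\[
\int_X\bigl(u\,\sigma_2(D^2v,D^2w)-v\,\sigma_2(D^2u,D^2w)\bigr)\,dx=\tfrac12\oint_M(uv_j-vu_j)\,T_1(D^2w)_{ij}\nu_i\,d\mu.
\]
The proof then reduces to matching this boundary integral with $\oint_M(uB_2(v,w)-vB_2(u,w))\,d\mu$.

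To match them, I split the integrand in coordinates adapted to $M$ at a boundary point (so $\nu=e_n$). The decomposition $\Delta w=\oDelta w+Hw_n+w_{nn}$ gives $T_1(D^2w)_{nn}=\oDelta w+Hw_n$, while $T_1(D^2w)_{n\alpha}=-w_{\alpha n}$ for tangential $\alpha$; substituting $w_{\alpha n}=w_{n\bar\alpha}-L_{\alpha\beta}w_\beta$ from~\eqref{eqn:tangential_mixed} produces three of the four terms of $uB_2(v,w)-vB_2(u,w)$---the $(uv_n-vu_n)\oDelta w$, the $Hw_n(uv_n-vu_n)$, and the $L(u\onabla v-v\onabla u,\onabla w)$ contributions---and a single residual term $-(uv_\alpha-vu_\alpha)w_{n\bar\alpha}$. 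Since $M$ is closed, the tangential vector field $u\onabla v-v\onabla u$ has tangential divergence $u\oDelta v-v\oDelta u$, and integrating by parts along $M$ converts this residual into $\oint_M w_n(u\oDelta v-v\oDelta u)\,d\mu$, supplying exactly the missing fourth term of $B_2$.

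The principal obstacle is the mixed-derivative term $T_1(D^2w)_{n\alpha}=-w_{\alpha n}$: it is not of the form allowed in $B_2$ (second-order tangential plus first-order transverse), and even after~\eqref{eqn:tangential_mixed} is applied it persists as a tangential gradient of $w_n$. What makes the reduction succeed is that this gradient appears paired against the antisymmetric combination $u\onabla v-v\onabla u$, whose divergence produces precisely the $\oDelta$ entries of $B_2$; the closedness of $M$ is essential for this final integration by parts to leave no residual boundary contribution.
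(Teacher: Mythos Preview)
Your argument is correct and complete. It takes a genuinely different route from the paper's proof, however, and the comparison is worth recording.

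The paper proves Proposition~\ref{prop:boundary2} indirectly: it invokes the general construction of Sections~\ref{sec:polarized}--\ref{sec:adjusted} to produce an operator $\tilde B_2$ (which carries extra terms involving $\bar D^2$ and $\onabla H$) already known to make $\mQ_2$ symmetric, and then checks by hand that $\oint_M u\bigl(\tilde B_2(v,w)-B_2(v,w)\bigr)\,d\mu$ is itself symmetric in $u,v,w$, via a divergence identity on $M$. Thus the paper treats $B_2$ as a simplification of the general $B_k$, and the proposition serves to illustrate how the general machinery collapses when $k=2$.

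Your proof is self-contained and bypasses Sections~\ref{sec:polarized}--\ref{sec:adjusted} entirely. You exploit directly that $T_1(D^2w)$ is symmetric and divergence-free in $\bR^n$, so two integrations by parts in $X$ reduce the interior difference $\int_X\bigl(u\sigma_2(D^2v,D^2w)-v\sigma_2(D^2u,D^2w)\bigr)$ to a single boundary integral, which you then match to $\oint_M\bigl(uB_2(v,w)-vB_2(u,w)\bigr)$ using~\eqref{eqn:tangential_hessian}, \eqref{eqn:tangential_mixed}, and one tangential integration by parts. This is the natural ``bare-hands'' verification and is arguably the cleanest way to establish the $k=2$ case in isolation. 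The trade-off is that the paper's argument exhibits explicitly \emph{which} symmetric correction separates $B_2$ from the general construction, and so fits the narrative of the article; your argument, while more elementary, does not connect to the general $B_k$ and would be harder to scale to larger $k$ because the boundary bookkeeping after the $T_{k-1}$ integration by parts becomes precisely the inductive cascade of Section~\ref{sec:polarized}.
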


\begin{proof}
 Following the proof of Proposition~\ref{prop:main_prop}, we see that a suitable choice of boundary operator is
 \begin{multline*}
  \tilde B_2(v,w) := \frac{1}{2}\left(v_n\oDelta w + w_n\oDelta v + L(\onabla v,\onabla w) + Hv_nw_n\right) \\ + \frac{1}{6}\left( A(\onabla v,\onabla w) + v\lp A,\bar{D}^2w\rp + w\lp A,\bar{D}^2v\rp + v\lp\onabla H,\onabla w\rp + w\lp\onabla H,\onabla v\rp \right) .
 \end{multline*}
 A straightforward computation yields
 \begin{multline*}
  \odelta\left(vA(\onabla w)\right) + \odelta\left(wA(\onabla v)\right) - A(\onabla v,\onabla w) \\ = A(\onabla v,\onabla w) + v\lp A,\bar{D}^2w\rp + w\lp A,\bar{D}^2v\rp + v\lp\onabla H,\onabla w\rp + w\lp\onabla H,\onabla v\rp
 \end{multline*}
 On the other hand,
 \begin{multline*}
  \oint_M u\left[\,\odelta\left(vA(\onabla w)\right) + \odelta\left(wA(\onabla v)\right) - A(\onabla v,\onabla w)\right]d\mu \\ = -\oint_M \left[ uA(\onabla v,\onabla w) + vA(\onabla w,\onabla u) + wA(\onabla u,\onabla v) \right]d\mu
 \end{multline*}
 is symmetric in $u,v,w$.  Thus $B_2-\tilde B_2$, and hence $\mQ_2$, is symmetric in $u,v,w$. 
\end{proof}

Applying this boundary operator in Theorem~\ref{thm:main_thm} yields the following sharp Sobolev trace inequality.

\begin{theorem}
 \label{thm:trace2}
 Let $X\subset\bR^n$ be a bounded smooth mean-convex domain with boundary $M=\partial X$.  Given $f\in C^\infty(M)$, set
 \[ \mC_f = \left\{ u\in \Gamma_2^+ \suchthat u\rv_M=f \right\} . \]
 Then it holds that
 \[ -\int_X u\sigma_2(D^2u) dx+ \oint_M uB_2(u,u) d\mu\geq \oint_M fB_2(u_f,u_f)d\mu \]
 for all $u\in\overline{\mC_f}$, where $B_2$ is the operator~\eqref{eqn:boundary2} and $u_f\in C^{1,1}(\oX)\cap\overline{\Gamma_2^+}$ is the unique solution to the Dirichlet problem
 \[ \begin{cases}
     \sigma_2(D^2u_f)=0, & \text{in $X$}, \\
     u = f, & \text{on $M$} .
    \end{cases} \]
\end{theorem}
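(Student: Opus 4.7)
The plan is to deduce Theorem~\ref{thm:trace2} as the direct specialization of Theorem~\ref{thm:main_thm} to the case $k=2$, once one unpacks both sides of~\eqref{eqn:k-sobolev-trace} using the explicit boundary operator supplied by Proposition~\ref{prop:boundary2}.

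First I would check that the hypotheses of Theorem~\ref{thm:main_thm} are satisfied. For $k=2$, the $(k-1)$-convex condition $\sigma_j(L)>0$, $1\leq j\leq k-1$, reduces to $\sigma_1(L)=H>0$, which is exactly the assumed mean-convexity of $M$. Under this hypothesis the degenerate Dirichlet problem $\sigma_2(D^2u_f)=0$ in $X$, $u_f|_M=f$, admits a unique solution $u_f\in C^{1,1}(\oX)\cap\overline{\Gamma_2^+}$ by~\cite{IvochkinaTrudingerWang2004, WangXu}. Moreover, the class $\mC_f$ in the statement of Theorem~\ref{thm:trace2} coincides with $\mC_{f,2}$ from Theorem~\ref{thm:main_thm}, and Proposition~\ref{prop:boundary2} establishes that the operator $B_2$ defined by~\eqref{eqn:boundary2} satisfies the conclusions of Proposition~\ref{prop:main_prop} for $k=2$. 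Hence Theorem~\ref{thm:main_thm} applies with this choice of $B_2$ and yields $\mE_2(u)\geq\mE_2(u_f)$ for all $u\in\overline{\mC_f}$.

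Second, I would translate this inequality into the form asserted. By definition,
\[ \mE_2(u) = \mQ_2(u,u,u) = -\int_X u\,\sigma_2(D^2u)\,dx + \oint_M u\,B_2(u,u)\,d\mu , \]
which is precisely the left-hand side of the claimed inequality. For the right-hand side, I would use that $\sigma_2(D^2u_f)=0$ in $X$ to kill the interior contribution, together with $u_f|_M=f$, obtaining
\[ \mE_2(u_f) = -\int_X u_f\,\sigma_2(D^2u_f)\,dx + \oint_M u_f\,B_2(u_f,u_f)\,d\mu = \oint_M f\,B_2(u_f,u_f)\,d\mu . \]
Combining the two identities with $\mE_2(u)\geq\mE_2(u_f)$ gives the stated inequality.

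There is no substantive obstacle beyond what has already been done: the argument is purely a bookkeeping exercise once Theorem~\ref{thm:main_thm} and Proposition~\ref{prop:boundary2} are in hand. The one place that requires a moment of care is verifying that $\mE_2$ is well-defined on the $C^{1,1}$-closure $\overline{\mC_f}$. This is immediate because $B_2(u,u)$ depends on $u$ only through $u|_M=f$ (which is smooth, so $\bar\nabla u=\bar\nabla f$ and $\bar\Delta u=\bar\Delta f$ are smooth on $M$) and through the normal derivative $u_n$ (which is bounded for $u\in C^{1,1}(\oX)$), while the interior integrand $u\,\sigma_2(D^2u)$ is bounded a.e. These regularity observations are exactly what motivated the specification~\eqref{eqn:boundary_requirement} of the domain of $B_k$ in Proposition~\ref{prop:main_prop}.
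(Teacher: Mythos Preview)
Your proposal is correct and matches the paper's approach exactly: the paper simply states that Theorem~\ref{thm:trace2} follows by applying the boundary operator from Proposition~\ref{prop:boundary2} in Theorem~\ref{thm:main_thm}, and your write-up carries out precisely this specialization, including the identification of mean-convexity with $(k-1)$-convexity for $k=2$ and the reduction of $\mE_2(u_f)$ to the boundary integral via $\sigma_2(D^2u_f)=0$.
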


\begin{bibdiv}
\begin{biblist}

\bib{Caff4}
{article}{
   author={Caffarelli, L.},
   author={Nirenberg, L.},
   author={Spruck, J.},
   title={The Dirichlet problem for nonlinear second-order elliptic
   equations. III. Functions of the eigenvalues of the Hessian},
   journal={Acta Math.},
   volume={155},
   date={1985},
   number={3-4},
   pages={261--301},
   issn={0001-5962},
   review={\MR{806416}},
   doi={10.1007/BF02392544},
}

\bib{CaffarelliSilvestre2007}{article}{
   author={Caffarelli, Luis},
   author={Silvestre, Luis},
   title={An extension problem related to the fractional Laplacian},
   journal={Comm. Partial Differential Equations},
   volume={32},
   date={2007},
   number={7-9},
   pages={1245--1260},
   issn={0360-5302},
   review={\MR{2354493}},
   doi={10.1080/03605300600987306},
}

\bib{Case2015fi}{article}{
  author={Case, J.},
  title={Some energy inequalities involving fractional GJMS operators},
  date={2015},
  note={arXiv:1509.08347},
}

\bib{CaseChang2013}{article}{
  author={Case, Jeffrey S.},
  author={Chang, Sun-Yung A.},
  title={On fractional GJMS operators},
  journal={Comm. Pure Appl. Math.},
  volume={69},
  date={2016}
  number={6},
  pages={150--194},
  doi={10.1002/cpa.21564},
}

\bib{ChangGonzalez2011}{article}{
   author={Chang, Sun-Yung Alice},
   author={Gonz{\'a}lez, Mar{\'{\i}}a del Mar},
   title={Fractional Laplacian in conformal geometry},
   journal={Adv. Math.},
   volume={226},
   date={2011},
   number={2},
   pages={1410--1432},
   issn={0001-8708},
   review={\MR{2737789}},
   doi={10.1016/j.aim.2010.07.016},
}

\bib{ChangYang2015}{article}{
  author={Chang, Sun-Yung Alice},
  author={Yang, Ray},
  title={On a class of non-local operators in conformal geometry},
  date={2015},
  note={preprint},
}

\bib{Chen2009}{article}{
   author={Chen, Szu-yu Sophie},
   title={Conformal deformation on manifolds with boundary},
   journal={Geom. Funct. Anal.},
   volume={19},
   date={2009},
   number={4},
   pages={1029--1064},
   issn={1016-443X},
   review={\MR{2570314}},
   doi={10.1007/s00039-009-0028-0},
}

\bib{Escobar1988}{article}{
   author={Escobar, Jos{\'e} F.},
   title={Sharp constant in a Sobolev trace inequality},
   journal={Indiana Univ. Math. J.},
   volume={37},
   date={1988},
   number={3},
   pages={687--698},
   issn={0022-2518},
   review={\MR{962929}},
   doi={10.1512/iumj.1988.37.37033},
}

\bib{Escobar1990}{article}{
   author={Escobar, Jos{\'e} F.},
   title={Uniqueness theorems on conformal deformation of metrics, Sobolev
   inequalities, and an eigenvalue estimate},
   journal={Comm. Pure Appl. Math.},
   volume={43},
   date={1990},
   number={7},
   pages={857--883},
   issn={0010-3640},
   review={\MR{1072395}},
   doi={10.1002/cpa.3160430703},
}

\bib{Escobar1992}{article}{
   author={Escobar, Jos{\'e} F.},
   title={Conformal deformation of a Riemannian metric to a scalar flat
   metric with constant mean curvature on the boundary},
   journal={Ann. of Math. (2)},
   volume={136},
   date={1992},
   number={1},
   pages={1--50},
   issn={0003-486X},
   review={\MR{1173925}},
   doi={10.2307/2946545},
}

\bib{Garding}
{article}{
   author={G{\.a}rding, Lars},
   title={An inequality for hyperbolic polynomials},
   journal={J. Math. Mech.},
   volume={8},
   date={1959},
   pages={957--965},
   review={\MR{0113978}},
}

\bib{GonzalezQing2013}{article}{
   author={Gonz{\'a}lez, Mar{\'{\i}}a del Mar},
   author={Qing, Jie},
   title={Fractional conformal Laplacians and fractional Yamabe problems},
   journal={Anal. PDE},
   volume={6},
   date={2013},
   number={7},
   pages={1535--1576},
   issn={2157-5045},
   review={\MR{3148060}},
   doi={10.2140/apde.2013.6.1535},
}

\bib{IvochkinaTrudingerWang2004}{article}{
   author={Ivochkina, Nina},
   author={Trudinger, Neil},
   author={Wang, Xu-Jia},
   title={The Dirichlet problem for degenerate Hessian equations},
   journal={Comm. Partial Differential Equations},
   volume={29},
   date={2004},
   number={1-2},
   pages={219--235},
   issn={0360-5302},
   review={\MR{2038151}},
   doi={10.1081/PDE-120028851},
}

\bib{LiZhu1997}{article}{
   author={Li, Yanyan},
   author={Zhu, Meijun},
   title={Sharp Sobolev trace inequalities on Riemannian manifolds with
   boundaries},
   journal={Comm. Pure Appl. Math.},
   volume={50},
   date={1997},
   number={5},
   pages={449--487},
   issn={0010-3640},
   review={\MR{1443055}},
   doi={10.1002/(SICI)1097-0312(199705)50:5<449::AID-CPA2>3.3.CO;2-5},
}

\bib{Urbas1990}{article}{
   author={Urbas, John I. E.},
   title={On the existence of nonclassical solutions for two classes of
   fully nonlinear elliptic equations},
   journal={Indiana Univ. Math. J.},
   volume={39},
   date={1990},
   number={2},
   pages={355--382},
   issn={0022-2518},
   review={\MR{1089043}},
   doi={10.1512/iumj.1990.39.39020},
}

\bib{WangXu}{article}{
   author={Wang, Qi},
   author={Xu, Chao-Jiang},
   title={$dC^{1,1}$ solution of the Dirichlet problem for degenerate
   $k$-Hessian equations},
   journal={Nonlinear Anal.},
   volume={104},
   date={2014},
   pages={133--146},
   issn={0362-546X},
   review={\MR{3196895}},
   doi={10.1016/j.na.2014.03.016},
}

\bib{Wang2}{article}{
   author={Wang, Xu Jia},
   title={A class of fully nonlinear elliptic equations and related
   functionals},
   journal={Indiana Univ. Math. J.},
   volume={43},
   date={1994},
   number={1},
   pages={25--54},
   issn={0022-2518},
   review={\MR{1275451}},
   doi={10.1512/iumj.1994.43.43002},
}

\bib{Wang}{article}{
   author={Wang, Xu-Jia},
 title={The $k$-Hessian equation},
   conference={
      title={Geometric analysis and PDEs},},
   book={
      series={Lecture Notes in Math.},
      volume={1977},
      publisher={Springer, Dordrecht},
   },
   date={2009},
   pages={177--252},
   review={\MR{2500526}},
}
	
\bib{Yang2013}{article}{
  author={Yang, Ray},
  title={On higher order extensions for the fractional Laplacian},
  date={2013},
  note={arXiv:1302.4413},
}		
\end{biblist}
\end{bibdiv}
\end{document}